\def\@cite#1#2{{\m@th\upshape\bfseries%
[{#1\if@tempswa{\m@th\upshape\mdseries, #2}\fi}]}}
\theoremstyle{plain}
\newtheorem{theorem}{Theorem}[section]
\newtheorem{corollary}[theorem]{Corollary}
\newtheorem{proposition}[theorem]{Proposition}
\newtheorem{lemma}[theorem]{Lemma}
\theoremstyle{definition}
\newtheorem{definition}[theorem]{Definition}
\newtheorem{example}[theorem]{Example}
\newtheorem{remark}[theorem]{Remark}
\theoremstyle{remark}
\newcommand{\bbC}{{\mathbb{C}}}
\newcommand{\bbN}{{\mathbb{N}}}
\newcommand{\bbQ}{{\mathbb{Q}}}
\newcommand{\bbR}{{\mathbb{R}}}
\newcommand{\bbT}{{\mathbb{T}}}
\newcommand{\bbZ}{{\mathbb{Z}}}
\newcommand{\A}{{\mathcal{A}}}
\newcommand{\B}{{\mathcal{B}}}
\newcommand{\C}{{\mathcal{C}}}
\newcommand{\D}{{\mathcal{D}}}
\newcommand{\F}{{\mathcal{F}}}
\renewcommand{\L}{{\mathcal{L}}}
\newcommand{\R}{{\mathcal{R}}}
\renewcommand{\S}{{\mathcal{S}}}
\newcommand{\T}{{\mathcal{T}}}
\newcommand{\fA}{{\mathfrak{A}}}
\newcommand{\fD}{{\mathfrak{D}}}
\newcommand{\fG}{{\mathfrak{G}}}
\renewcommand{\phi}{\varphi}
\newcommand{\upchi}{{\raise.35ex\hbox{\ensuremath{\chi}}}}
\def\gl{\lambda}
\newcommand{\alg}{\operatorname{alg}}
\newcommand{\dist}{\operatorname{dist}}
\newcommand{\lat}{\operatorname{lat}}
\newcommand\supp{\mathop{\rm supp}}
\newcommand{\ca}{\mathrm{C}^*}
\newcommand{\cenv}{\mathrm{C}^*_{\text{env}}}
\newcommand{\Gr}{G_{\text{red}}}
\newcommand{\hdi}{\hat{\delta}}
\newcommand{\sca}[1]{\left\langle#1\right\rangle}
\newcommand{\nor}[1]{\left\Vert #1\right\Vert}
\begin{document}

\title[Integer valued cocycles]{Limit algebras and integer-valued cocycles, revisited}

\author[E.G. Katsoulis]{Elias~G.~Katsoulis}
\address {Department of Mathematics
\\East Carolina University\\ Greenville, NC 27858\\USA}
\email{katsoulise@ecu.edu}

\author[C. Ramsey]{Christopher~Ramsey}
\address {Department of Mathematics
\\University of Virginia\\ Charlottesville, VA 22904\\USA}
\email{cir6d@virginia.edu}

\thanks{2010 {\it  Mathematics Subject Classification.}
46L08, 47B49, 47L40, 47L65}
\thanks{{\it Key words and phrases:} C$^*$-correspondence, cocycle, tensor algebra, triangular limit algebra}

\maketitle

\begin{abstract}
A triangular limit algebra $\A$ is isometrically isomorphic to the tensor algebra of a $\ca$-correspondence if and only if its fundamental relation $\R(\A)$ is a tree admitting a $\bbZ^+_0$-valued continuous and coherent cocycle. For triangular limit algebras which are isomorphic to tensor algebras, we give a very concrete description for their defining $\ca$-correspondence and we show that it forms a complete invariant for isometric isomorphisms between such algebras. A related class of operator algebras is also classified using a variant of the Aho-Hopcroft-Ullman algorithm from computer aided graph theory.
\end{abstract}

\section{Introduction}
In this paper we explore the common ground between two important classes of operator algebras: triangular limit algebras and tensor algebras of $\ca$-correspondences. The limit algebras were introduced in the 80's by Power \cite{Pow1} and Peters, Poon and Wagner \cite{PPW1} and in a broader context by Muhly and Solel \cite{MScrel}. These algebras were studied extensively in the 90's by many authors, including \cite{DavKatAdv, Don, DonH, DonHJFA, Hu, LS, PPW1, PPWcoc,Pow, Powlex}. The tensor algebras of $\ca$-correspondences form a newer class of algebras which continues to be at the cutting edge of scientific inquiry. This class of operator algebras incorporates as examples many classes of operator algebras that have been studied independently in the past, including semicrossed products of $\ca$-algebras~\cite{Pet}, quiver (or graph) algebras~\cite{MS} and non-commutative disc algebras~\cite{Pop}, to mention just a few. The tensor algebras were introduced by Muhly and Solel in \cite{MS} soon after Pimsner's seminal paper \cite{Pim}. Contributions here include the work of various authors as well \cite{DavKatMem, DavRoyd,  Kak, KakKatTrans, KatsoulisKribsJFA, MSten, Pop, Popten}.

Our primary objective is to characterize which operator algebras belong simultaneously to both classes mentioned above. In the finite dimensions, a limit algebra is just a digraph algebra; a triangular digraph algebra is isometrically isomorphic to the tensor algebra of a $\ca$-correspondence if and only if its digraph is the transitive completion of an out-forest. (See below for definitions.) In the infinite dimensions a useful answer has to be more involved and requires the concept of the fundamental relation. This is achieved in Theorem~\ref{main} where we show that a triangular limit algebra $\A$ is isometrically isomorphic to the tensor algebra of a $\ca$-correspondence if and only if its fundamental relation $\R(\A)$ is a tree admitting a $\bbZ^+_0$-valued continuous and coherent cocycle. 

Theorem~\ref{main} relates to earlier work of others and has some interesting applications. The concept of a tree semigroupoid first appeared in \cite{MSMem}, in an important study of a related class of operator algebras, the so called tree algebras. (Following \cite{DKDoc}, the tree algebras will be called semi-Dirichlet in this paper.)  Cocycles on non-selfadjoint operator algebras have been studied before in a slightly different context \cite{PPWcoc, PPWex, Sol} and our results enrich these studies.  Using Theorem~\ref{main} we do show that the triangular limit algebras which are also tensor algebras are exactly the direct limits of their finite dimensional counterparts, under appropriate embeddings (see Theorem~\ref{presentation1}). This puts an old result of Poon and Wagner \cite[Theorem 2.9]{PW} in a new perspective. In Proposition~\ref{counterexample}, we give examples of Dirichlet algebras which are not \textit{isometrically} isomorphic to tensor algebras, thus strengthening a recent result of Kakariadis \cite{Kak}. Also Proposition \ref{secondex} gives new examples of semi-Dirichlet algebras which are neither Dirichlet nor tensor algebras. This continues a theme originating from \cite{KR} and provides (yet another) class of examples that answer a question of Ken Davidson.

In the last section of the paper we classify the various algebras appearing in this paper. It is still an open problem whether or not the unitary equivalence class of a $\ca$-correspondence forms a complete invariant for isometric isomorphisms between the associated tensor algebras. Nevertheless, for the class of tensor algebras considered here, the answer is affirmative as Corollary~\ref{clas1} shows. This is accomplished by showing that the defining $\ca$- correspondence for a tensor algebra $\A$, which is isometrically isomorphic to a triangular limit algebra, can actually be materialized as an appropriate invariant subset of its fundamental relation $\R(\A)$ (Theorem~\ref{maindescr}). The result then follows from a well-known result of Power \cite[Theorem 7.5]{Pow}. In the same section, we also classify another class of limit algebras, the so-called tree-refinement algebras, which are semi-Dirichlet but not isomorphic to tensor algebras. These algebras generalize the familiar refinement algebras and interestingly their classification requires a variant of the Aho-Hopcroft-Ullman algorithm \cite{AHU} from computer aided graph theory.

 \section{preliminaries}

Let $D$ be a $\ca$-algebra. An \emph{inner-product right $D$-module} is a linear space $X$ which is a right $D$-module together with a $D$-valued inner product $\sca{\cdot,\cdot}$ that satisfies
\begin{align*}
\sca{\xi, \gl \zeta+ \eta}&= \gl\sca{\xi,\zeta} + \sca{\xi,\eta}\qquad
\\
\sca{\xi, \eta d}&= \sca{\xi,\eta}d \qquad \\
\sca{\eta,\xi}&= \sca{\xi,\eta}^* \qquad \\
\sca{\xi,\xi} &\geq 0 \text{; if }\sca{\xi,\xi}= 0 \text{ then }
\xi=0,
\end{align*}
with $\zeta, \eta, \xi \in X$, $\lambda \in \bbC$ and $d \in D$. For $\xi \in X$ we write $\nor{\xi}_X^2 := \nor{\sca{\xi,\xi}}_{D}$ and one can deduce that $\nor{\cdot}_X$ is actually a norm. Equipped with that norm, $X$ will be called a \emph{ Hilbert $D$-module} if it is complete and will be denoted as $X_{D}$, or simply $X$.

For a Hilbert $D$-module $X$ we define the set $L(X)$ of the \emph{adjointable maps} that consists of all maps $s:X \rightarrow X$ for which there is a map $s^*: X \rightarrow X$ such that
\[
\sca{s\xi,\eta}= \sca{\xi,s^*\eta}, \,\, \xi, \eta \in X.
\]

The compact operators $K(X) \subseteq L(X)$ is the closed subalgebra of $L(X)$ generated by the ``rank one" operators 
\[
\theta_{\xi , \eta}(z) := \xi  \sca{\eta ,z}, \quad \xi, \eta ,z \in X
\]

\begin{definition} A $\ca$-correspondence $(X, D, \phi)$ consists of a Hilbert D-module $(X, D)$ and a left action
\[
\phi: D \longrightarrow L(X).
\]
If $\phi$ is injective then the $\ca$-correspondence $(X, D, \phi)$ is said to be injective.\end{definition}

A (Toeplitz) representation $(\pi,t)$ of $X$ into a $\ca$-algebra $\B$, is a pair consisting of a $*$-homomorphism $\pi\colon D \rightarrow B$ and a linear map $t\colon X \rightarrow \B$, such that
\begin{enumerate}
 \item $\pi(d)t(\xi)=t(\phi_X(d)(\xi))$,
 \item $t(\xi)^*t(\eta)=\pi(\sca{\xi,\eta}_X)$,
\end{enumerate}
for $d \in D$ and $\xi,\eta\in X$. An easy application of the $\ca$-identity shows that
\begin{itemize}
\item[(iii)] $t(\xi)\pi(d)=t(\xi d)$
\end{itemize}
 is also valid. A representation $(\pi , t)$ is said to be \textit{injective} iff $\pi$ is injective. In that case $t$ is an isometry.

\begin{definition} The \emph{tensor algebra} $\T_{X}^+$ is the norm-closed subalgebra of $\T_X$ generated by all elements of the form $\pi_{\infty}(a), t_{\infty}(\xi)$, $a \in A$, $\xi \in X$, where $(\pi_{\infty}, t_{\infty})$ denotes the universal Toeplitz representation of $(X, A, \phi)$.
\end{definition}

A special class of $\ca$-corespondences arises from the class of finite directed graphs. Let $G = (G^0, G^1, r, s)$ be a finite graph, where $G^0, G^1$ denotes the vertex and edge sets respectively and $r,s: G^1\rightarrow G^0$ the range and source maps respectively. Let $D= c_0( G^0)$, $ X_{G} = c_0(G^1)$
\[
\sca{\xi , \eta}(p)=\sum_{s(e)=p} \, \overline{\xi(e)}\eta (e), \,\, \xi, \eta \in X_{G},  \, p\in G^0
\]
and $\big( \phi( f) \xi g \big)(e) = f(r(e))\xi(e)h(s(e))$, with $ \xi \in X_{G}$, $f,g \in D$ and $ e \in G^1$.

Let $(\pi, t)$ be a non-degenerate Toeplitz representation of the \textit{graph correspondence} $(X_{G} , D)$. Let $L_p=\pi(1_p)$, $p \in G^0$ and $L_e = t(1_e)$, $e \in G^1$, where $1_e \in X_{G}$ denotes the characteristic function of the singleton $\{e\}$, $e \in G^1$ and similarly for $1_g \in D$, with $g \in G^0$. Then $L_pL_q=\pi(1_p1_q)=\delta_{p,q}L_p$, where $p,q \in G^0$.
Also, 
\[
L_e^*L_f=t(1_e)^*t(1_f)= \pi(\sca{1_e, 1_f})=0, \,\, \mbox{for }e, f \in G^1, e\neq f
\]
and similarly $L_e^*L_e=L_{s(e)}$, $e \in G^1$. It is clear that the family $\{ L_e\}_{ e\in
G^{1}}$  of partial isometries  and  $\{ L_p \}_{p\in G^{0}}$ of projections obey
the Cuntz-Krieger-Toeplitz relations
\begin{equation} \label{CKT}
  \left\{
\begin{array}{lll}
(1)  & L_p L_q = 0 & \mbox{$\forall\, p,q \in G^{0}$, $ p \neq q$}  \\
(2) & L_{e}^{*}L_f = 0 & \mbox{$\forall\, e, f \in G^{1}$, $e \neq f $}  \\
(3) & L_{e}^{*}L_e = L_{s(e)} & \mbox{$\forall\, e \in G^{1}$}      \\
(4)  & L_e L_{e}^{*} \leq L_{r(e)} & \mbox{$\forall\, e \in G^{1}$} \\
(5)  & \sum_{r(e)=p}\, L_e L_{e}^{*} \leq L_{p} & \mbox{$\forall\, p
\in G^{1}$ }
\end{array}
\right.
\end{equation}

Conversely given  a family $\{ L_e\}_{ e\in
G^{1}}$  of partial isometries  and  $\{ L_p \}_{p\in G^{0}}$ of projections obeying
the Cuntz-Krieger-Toeplitz relations of (\ref{CKT}), we can define a Toeplitz representation of the graph correspondence $(X_{G}, D)$ by setting $\pi(1_p)=L_p$, $ p \in G^0$, $t(1_e)= L_e$, $e \in G^1$ and extending by linearity. In this case the tensor algebra of $(X_{G}, D)$ is the familiar quiver algebra $\T_{G}^+$ of Muhly and Solel \cite{MS}. 

A maximal abelian selfadjoint subalgebra (masa) $\D$ of an AF $\ca$-algebra $\C$ is said to be \textit{regular canonical} provided that there exists a nested sequence of finite dimensional $\ca$-algebras $\{ \C_n\}_{in= 1}^{\infty}$ so that the following are satisfied
\begin{itemize}
\item[(i)] $\C = \overline{\cup_n \C_n}$
\item[(ii)] $\D_n= \C_n \cap \D$ is a masa in $\C_n$, for all $n \in \bbN$, and $\D = \overline{\cup_n \D_n}$
\item[(iii)] $N_{\D_n}(\C_n) \subseteq N_{\D_{n+1}}(\C_{n+1})$, for all $n \in \bbN$, 
\end{itemize}
where
\[
N_Y(X) = \{x \in X\mid (x^*x)^2=x^*x \mbox{ and } xyx^*, x^*yx \in Y, \mbox{ for all }y \in Y \}.
\]
Alternatively such masas can be described as follows. Consider an AF $\ca$-algebra $\C$ as a direct limit $\C= \varinjlim (\C_n, \rho_n)$, where each $\C_n$ is direct sum of full matrix algebras and all the embeddings $\rho_n$ are regular, i.e., they map matrix units to sums of matrix units. Then the subalgebra $\D \subseteq \C$ determined by the (sub)system $\D= \varinjlim (\D_n, \rho_n)$, where each $\D_n \subseteq \C_n$ consists of the diagonal matrices, is a regular canonical masa. 

In the sequel we will work exclusively with operator algebras $\A \subseteq \C$ which contain a canonical masa, i.e., regular canonical subalgebras of AF $\ca$-algebras. We call such algebras \textit{limit algebras}. By inductivity \cite[Theorem 4.7]{Pow} this implies that for such an algebra $\A$, each of the subalgebras $\C_n \cap \A$ is a finite dimensional CSL algebras containing $\D_n$ as a masa and the inclusion 
\[
N_{\D_n}(\A_n) \subseteq N_{\D_{n+1}}(\A_{n+1})
\]
is satisfied for all $n \in \bbN$. Alternatively such algebras can be described as direct limit algebras of digraph algebras, i.e., subalgebras of the $k \times k$ matrices over $\bbC$ containing the diagonal matrices. This is done as follows. Let  $\D= \varinjlim (\D_n, \rho_n)$ be a canonical masa of an AF $\ca$-algebra $\C= \varinjlim (\C_n, \rho_n)$ as in the previous paragraph, and let  all $\{ \A_n\}_{n=1}^{\infty}$ be digraph subalgebras of $\C_n$ with $\rho(\A_n) \subseteq \A_{n+1}$, for all $n \in \bbN$. Then $\A= \varinjlim (\A_n, \rho_n)$ is a regular limit algebra and any regular limit algebra arises this way. If $\A$ is a regular canonical subalgebra of an AF $\ca$-algebra, then the quadruple $\big\{ \A_n, \C_n, \D_n, \rho_n\big\}_{n=1}^{\infty}$ is said to be a \textit{presentation} for $\A$. 
Recall that an operator algebra $\A$ contained in a C$^*$-algebra $\C$ is called \textit{triangular} if $\A \cap \A^* = \D$ is abelian (and a masa in this context). 

A special but important class of triangular limit algebras are the \textit{strongly maximal} TAF algebras (for triangular AF). These are limit algebras of the form $\A = \varinjlim (\A_n, \rho_n)$ as in the previous paragraph, with the extra requirement that each $\A_n$ consists of direct sums of upper triangular matrices.

\begin{example} \label{standard}
Let $\{e_{ij}\}_{i,j=1}^{n}$ denote the usual matrix unit system of the algebra $M_n(\bbC)$ of $n \times n$ complex matrices. An embedding $\sigma:M_n(\bbC) \rightarrow M_{mn}(\bbC)$ is said to be \textit{standard} if it satisfies $\sigma(e_{ij})=\sum_{k=0}^{m-1} e_{i +kn, j+kn}$, for all $i,j$. If $\A = \varinjlim (\A_n, \rho_n)$ is a limit algebra with all the embeddings $\rho_n: \A_n \rightarrow \A_{n+1}$, $n \in \bbN$, being direct sums of standard embeddings, then $\A$ is said to be a \textit{standard limit algebra}.
\end{example}

Finally some terminology from graph theory. A finite (weakly) connected graph $G$ is said to be an \textit{out-tree} (or \textit{arboresence}) iff for a vertex $p$ (called the \textit{root}) and any other vertex $q$, there is exactly one directed path from $p$ to $q$. In other words, an out-tree is a finite directed graph where no vertex receives more than one edge and its underlying undirected graph is both connected and acyclic. An \textit{out-forest} is a finite disjoint union of out-trees.

\section{Identifying which triangular limit algebras are isomorphic to tensor algebras}

Let $\A$ be a limit algebra with presentation $\big\{ \A_n, \C_n, \D_n, \rho_n\big\}_{n=1}^{\infty}$ and let $\D^{\star}$ be the Gelfand spectrum of its canonical masa $\D =  \varinjlim (\D_n, \rho_n)$. If $p \in \D$ is a projection, let $\tilde{p}=\{ x \in \D^{\star}\mid x(p)=1\}$. Each matrix unit $e \in \A_n$, $ n \in \bbN$, induces a partial homeomorphism $$\widetilde{\phantom{*}ee^*}\ni x(\cdot) \longmapsto x(e\cdot e^*)\in \widetilde{e^*e}.$$ Let $\hat{e} \subseteq \D^{\star} \times \D^{\star}$ denote the graph of that partial homeomorphism. We define
\[
\R(\A)= \bigcup\big\{ \hat{e} \mid e \mbox{ is a matrix unit in } \cup_n \A_n\big\}.
\]
We topologize $\R(\A)$ by using as a basis of open sets all sets of the form $\hat{e}$, where $e$ is ranging over all matrix units in $ \cup_n \A_n$. We call the AF-semigroupoid $\R(\A)$ the \textit{spectrum} or \textit{fundamental relation} of $\A$.

\begin{definition} \label{cocycle}
Let $\A$ be a triangular limit algebra and let $\R(\A)$ be the associated AF-semigroupoid on the Gelfand space $\D^{\star}$ of its canonical masa $\D$. A real valued function $\delta  : \R(\A) \rightarrow \bbR$ is said to be a \textit{cocycle} if it satisfies the following
\begin{itemize}
\item[(i)] $\delta(x, z) =\delta(x,y) +\delta(y, z)$, for all $(x, y) , (y, z) \in R$ 
\item[(ii)] $\delta^{-1}(\{0\}) = \widehat{\D}\equiv \cup_{e \in \D}\, \hat{e}$.
\end{itemize}
 If $\delta\big(\R(\A)\big) \subseteq \bbZ^+_0=\{0,1,2, \dots\}$ then $\delta $ is said to be a $\bbZ^+_0$-valued cocycle. 
An integer valued cocycle $\delta$ on $\R(\A)$ is said to be \textit{coherent} if any element in $\delta ^{-1}(\{ k\})$, $k=2, 3, \dots$, can be written as the product of $k$ elements from $\delta ^{-1}(\{ 1\})$.
\end{definition}

The concept of a cocycle has been studied before in the context of limit algebras in a different but nevertheless related setting \cite{PPoon, PPWcoc, PPWex}. These authors considered continuous $\bbZ$-valued cocycles defined on the AF-groupoid of the enveloping $\ca$-algebra, satisfying only property (i) in Definition~\ref{cocycle} and the additional property that the inverse image of the non-negative values of the cocycle coincides with the semigroupoid of the limit (sub)algebra. For strongly maximal TAF algebras, these two concepts of cocycle coincide.

\begin{proposition} Let $\A = \varinjlim (\A_n, \rho_n)$ be a strongly maximal TAF algebra with enveloping $\ca$-algebra  $\C = \varinjlim (\C_n, \rho_n)$ and diagonal $\D$. Let $\R(\C)$ and $\R(\A)$ be the AF-(semi)groupoids associated with $\C$ and $\A$ respectively.  If $\delta : \R(\A) \rightarrow \bbZ^+_0$ is a continuous coherent cocycle then $\delta $ extends to a continuous coherent cocycle $\tilde{\delta}: \R(\C) \rightarrow \bbZ$ with $\tilde{\delta}^{-1}( [0, \infty ) ) = \widehat{\A}$.
\end{proposition}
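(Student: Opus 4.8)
The plan is to extend $\delta$ antisymmetrically and to reduce every verification to a single finite matrix block, where strong maximality forces a total order. First I would record the two structural facts that make the construction possible: since each $\A_n$ is a direct sum of upper triangular matrix algebras, every matrix unit of $\C_n$ lies either in $\A_n$ or has its adjoint in $\A_n$, so that $\R(\C) = \R(\A) \cup \R(\A)^{-1}$; and triangularity $\A \cap \A^* = \D$ yields $\R(\A) \cap \R(\A)^{-1} = \widehat{\D}$. I would then define
\[
\tilde{\delta}(x,y) = \begin{cases} \delta(x,y), & (x,y) \in \R(\A), \\ -\delta(y,x), & (x,y) \in \R(\A)^{-1}. \end{cases}
\]
On the overlap $\widehat{\D}$ both clauses agree: if $(x,y) \in \R(\A) \cap \R(\A)^{-1}$ then $(x,x) \in \widehat{\D}$ forces $\delta(x,x) = 0$, so the cocycle identity (i) gives $\delta(x,y) = -\delta(y,x)$. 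Hence $\tilde{\delta}$ is well defined.

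Next I would dispose of the three easy properties. Continuity is immediate because $\tilde{\delta}$ is locally constant: on a basic open set $\hat{e}$ with $e \in \A_n$ it equals the constant $\delta|_{\hat{e}}$, and on $\hat{e}$ with $e^* \in \A_n$ it equals $-\delta$ pulled back along the inversion homeomorphism $\hat{e} \to \widehat{e^*}$, again constant. For the defining equality, observe that $\tilde{\delta} \le 0$ on $\R(\A)^{-1}$, with value $0$ exactly on $\widehat{\D}$; combined with $\tilde{\delta} = \delta \ge 0$ on $\R(\A)$ and $\R(\C) = \R(\A) \cup \R(\A)^{-1}$, this gives $\tilde{\delta}^{-1}([0,\infty)) = \R(\A) = \widehat{\A}$. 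Coherence reduces to that of $\delta$: for $k \ge 1$ one has $\tilde{\delta}^{-1}(\{k\}) = \delta^{-1}(\{k\})$ and in particular $\tilde{\delta}^{-1}(\{1\}) = \delta^{-1}(\{1\})$, so the required factorizations of a positive level set are exactly those provided by coherence of $\delta$ (and they remain valid products in $\R(\C)$), while the negative level sets are handled by applying the inversion $(x,y) \mapsto (y,x)$ to these factorizations.

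The main obstacle is verifying that $\tilde{\delta}$ is itself a cocycle, i.e. $\tilde{\delta}(x,z) = \tilde{\delta}(x,y) + \tilde{\delta}(y,z)$ for every composable pair in $\R(\C)$; the difficulty is the mixed case where one factor lies in $\R(\A)$ and the other in $\R(\A)^{-1}$, for then $(x,z)$ may lie on either side and no single clause of the definition applies. I would handle this by passing to a finite level: since $\R(\C)$ is an AF groupoid, the three points $x,y,z$ sit over atoms $p,q,r$ of a common full matrix block of some $\C_n$, and strong maximality makes the atoms of that block a totally ordered chain on which $\delta$ is recorded along ordered pairs. Telescoping the cocycle identity (i) along the chain produces a height function $h$ on the atoms with $\delta(a,b) = h(b) - h(a)$ whenever $a \preceq b$; the antisymmetric formula then gives $\tilde{\delta}(p,r) = h(r) - h(p)$ for all atoms $p,r$ of the block, with no case distinction. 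Additivity of $\tilde{\delta}$ is now the identity $h(r) - h(p) = (h(r) - h(q)) + (h(q) - h(p))$, and independence of the chosen level follows from continuity. This reduction to a totally ordered block is precisely where the hypothesis of strong maximality, rather than mere triangularity, is essential.
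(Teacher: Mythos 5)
Your construction is exactly the paper's: the published proof defines $\tilde{\delta}(x,y)=\delta(x,y)$ when $(x,y)\in\R(\A)$ and $\tilde{\delta}(x,y)=-\delta(y,x)$ otherwise, and dismisses the verification as ``easy to see,'' so you have supplied correct details for the same argument. The only remark worth making is that the mixed case of the cocycle identity already follows from a short case analysis --- rewrite the triple so that both composable factors lie in $\R(\A)$ and apply property (i) of $\delta$ directly --- so your height-function reduction on a totally ordered block, while sound once one works with the orbit points over the atoms rather than the atoms themselves, is heavier machinery than the statement requires.
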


\begin{proof}Since $\A$ is strongly maximal then given any $(x, y) \in \R(\C)$, either $(x, y) \in \R(\A)$ or otherwise $(y,x) \in \R(\A)$. In the first case set $\tilde{\delta}(x, y) = \delta(x,y)$ or otherwise $\tilde{\delta}(y, x) = -\delta(x,y)$. It is easy to see that this defines the desired cocycle $\tilde{\delta}: \R(\C) \rightarrow \bbZ$.
\end{proof}

Even though there is no distinguishing between the two concepts of cocycle in the strongly maximal TAF case, the situation changes dramatically if one removes maximality.

\begin{example}
Consider the out-tree $G$ appearing below

\vspace{.05in}

\[
\xymatrix{& 1 \ar[dl] \ar[dr]& &\\ 
2 & &3 \ar[dr] & \\
&& & 4}
\]

\vspace{.05in}
\noindent and let $\A$ be the digraph algebra corresponding to the transitive completion of $G$. (Note also that $\A \simeq \T^+_G$.) It is immediate that the only coherent cocycle $d: R(\A) \rightarrow \bbZ^+_0$ satisfies $d(\widehat{e}_{31})=d(\widehat{e}_{43})= d(\widehat{e}_{21})= \{1\}$. Therefore, any cocycle $\tilde{\delta}: \R\big(\ca(\A)\big) \rightarrow \bbZ$ extending $d$ would satisfy $d(\widehat{e}_{42})=\{1\}$ and $d(\widehat{e}_{32})=\{ 0\}$. Clearly it would be impossible from such a cocycle to recapture either $\D$ or $\A$ since $\D \neq \tilde{\delta}^{-1}(\{0\})$ and $\A \neq \tilde{\delta}^{-1}([0, \infty))$ 
\end{example}

Having ascertained the necessity of using cocycles defined only on the semigroupoid of the limit algebra we now have.

\begin{theorem} \label{main1}
Let $\A$ be a triangular limit algebra, i.e., triangular regular canonical subalgebra of an AF $\ca$-algebra. If $\A$ is isometrically isomorphic to the tensor algebra of a $\ca$-correspondence, then $\R(\A)$ admits a \textup{(}necessarily unique\textup{)} $\bbZ^+_0$-valued continuous and coherent cocycle. 
\end{theorem}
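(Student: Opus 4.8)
The plan is to transport the gauge action of the tensor algebra onto $\A$ and to read off the desired cocycle as the ``degree'' of each matrix unit. Fix an isometric isomorphism $\phi\colon \A \to \T_X^+$. Recall that $\T_X^+$ carries the \emph{gauge action} $\gamma\colon\bbT\to\Aut(\T_X^+)$ determined by $\gamma_z(\pi_\infty(a))=\pi_\infty(a)$ and $\gamma_z(t_\infty(\xi))=z\,t_\infty(\xi)$; it is point-norm continuous and each $\gamma_z$ is completely isometric. The associated Fourier coefficients $E_n(b)=\int_{\bbT}z^{-n}\gamma_z(b)\,dz$ exhibit $\T_X^+$ as the closed span of its homogeneous components of degrees $n\geq 0$, with degree-zero component $\pi_\infty(A)=\T_X^+\cap(\T_X^+)^*$. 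Transporting through $\phi$ produces a point-norm continuous one-parameter group $\alpha_z:=\phi^{-1}\gamma_z\phi$ of isometric automorphisms of $\A$. Since an isometric isomorphism between these algebras is completely isometric and so extends to a $*$-isomorphism of the $\ca$-envelopes carrying diagonal onto diagonal, we obtain $\phi(\D)=\pi_\infty(A)$; and as $\gamma_z$ fixes $\pi_\infty(A)$ pointwise, each $\alpha_z$ fixes $\D$ pointwise.

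Next I would prove that every matrix unit is homogeneous for $\alpha$. Let $e$ be a matrix unit in some $\A_n$ whose source and range projections $e^*e,\,ee^*$ are atoms of $\D$. Because $\alpha_z$ fixes $\D$ pointwise and is an automorphism, $\alpha_z(e)$ is again a partial isometry in $N_{\D}(\A)$ with $\alpha_z(e)\alpha_z(e)^*=ee^*$ and $\alpha_z(e)^*\alpha_z(e)=e^*e$; since the partial isometry in $N_{\D}(\A)$ between two fixed atoms is unique up to a unimodular scalar, $\alpha_z(e)=c(z)\,e$ for some $c(z)\in\bbT$. The map $c$ is a continuous character of $\bbT$, hence $c(z)=z^{k(e)}$ for a unique $k(e)\in\bbZ$; and because $\phi(e)\in\T_X^+$ has vanishing Fourier coefficients in negative degrees, $k(e)\geq 0$, with $k(e)=0$ exactly when $e\in\D$.

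I would then define $\delta$ on each basic open set $\hat e$ to be the constant $k(e)$. Refinement compatibility makes this unambiguous: if a matrix unit decomposes as $e=\sum_i e_i$ into finer matrix units, then $\sum_i z^{k(e_i)}e_i=\alpha_z(e)=z^{k(e)}\sum_i e_i$ forces $k(e_i)=k(e)$, so the value of $\delta$ depends only on the point of $\R(\A)$ and not on the representing matrix unit. By construction $\delta$ is locally constant, hence continuous; multiplicativity of the eigenvalues gives $k(ef)=k(e)+k(f)$ for composable matrix units, which is the cocycle identity (i); and $k(e)=0\iff e\in\D$ is precisely property (ii). Coherence follows from the tensor structure: a homogeneous matrix unit of degree $m$ lies in $\overline{\operatorname{span}}\{t_\infty(\xi_1)\cdots t_\infty(\xi_m)\}$, and compressing to the one-dimensional corner cut out by its atomic source and range projections shows that such a minimal matrix unit must itself be a product of $m$ matrix units of degree one.

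The main obstacle is the homogeneity step: one must know both that the intrinsic diagonal $\D$ is carried onto the gauge-fixed algebra $\pi_\infty(A)$ (so that $\alpha_z$ fixes $\D$) and that the normalizer $N_{\D}(\A)$ is rigid enough that a partial isometry between two atoms is determined up to a scalar --- only then does each $\alpha_z$ act on matrix units by a genuine character, yielding a well-defined nonnegative integer degree. For uniqueness, any $\bbZ^+_0$-valued continuous coherent cocycle must vanish exactly on $\widehat{\D}$ by (ii) and, by coherence together with additivity, is determined by its values on the non-factorable generating matrix units, each of which is forced to the value $1$; hence it coincides with $\delta$.
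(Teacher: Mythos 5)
Your overall strategy---transporting the gauge action onto $\A$ and reading off a degree from the grading of matrix units---is the same as the paper's, but the pivotal step is wrong: it is \emph{not} true that every matrix unit of $\A$ is homogeneous for the transported action $\alpha_z$. First, the source and range projections of a matrix unit $e\in\A_n$ are minimal in $\D_n$ but not in $\D$, which is in general atomless; and even granting that $\alpha_z(e)$ is a $\D$-normalizing partial isometry with the same source and range projections as $e$, such a partial isometry is determined only up to left multiplication by a unitary of $ee^*\D ee^*$, not up to a unimodular scalar, because those projections keep splitting at later stages of the presentation. A concrete counterexample: let $\rho_1\colon\T_2\to\T_4$ be the regular $*$-extendable embedding with $\rho_1(e_{11})=e_{11}+e_{22}$, $\rho_1(e_{22})=e_{33}+e_{44}$ and $\rho_1(e_{12})=e_{14}+e_{23}$, and follow it with standard embeddings $\T_4\to\T_8\to\cdots$. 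The resulting limit algebra coincides with a standard limit algebra (the system from the second stage on is cofinal), hence is isometrically isomorphic to a tensor algebra; yet the matrix unit $\rho_1(e_{12})$ satisfies $\alpha_z\big(\rho_1(e_{12})\big)=z^3e_{14}+z\,e_{23}$, which is not a scalar multiple of $e_{14}+e_{23}$. Consequently your ``refinement compatibility'' computation is circular (it presupposes $\alpha_z(e)=z^{k(e)}e$), and $\delta$ cannot be defined as a constant on every basic set $\hat e$: in the example it takes the values $3$ and $1$ on the two pieces of $\widehat{\rho_1(e_{12})}$.

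What is actually true, and what the paper establishes as Claim 1 of its proof, is the weaker statement that every matrix unit is a finite \emph{sum} of homogeneous matrix units. This is obtained not from rigidity of the normalizer but from a density argument: the Fej\'er-kernel fact that $\D$ together with the spectral subspaces $X^n$ spans a dense subspace of $\A$, the inductivity of each $X^n$ as a $\D$-bimodule (so that each is spanned by the matrix units it contains), and the observation that a matrix unit lying outside a $\D_m$-submodule spanned by matrix units has distance $1$ from it. The cocycle is then defined pointwise on $\R(\A)$, by assigning to $(x,y)\in\hat e$ the grade of the homogeneous summand of $e$ whose graph contains $(x,y)$; continuity, the cocycle identity, coherence and uniqueness then go through much as you outline. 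So the architecture of your argument is right, but the homogeneity lemma must be replaced by the sum decomposition, and $\delta$ must be defined on points of $\R(\A)$ rather than on matrix units.
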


\begin{proof}
Assume that $\A = \varinjlim (\A_n, \rho_n)$ is isometrically isomorphic to the tensor algebra $\T_X^+$ of a $\ca$-correspondence $(X, D)$. Now the fixed point algebra of the natural gauge action $\{ \psi_z\}_{z \in \bbT}$ on $\T_{X}^+$ equals the diagonal $(\T_X^+)^*\cap \T_X^+ =D$ and $X^{n} =X\cdot X\cdot \ldots \cdot X\subseteq \T_{X}^+$ coincides with the set of all $x  \in \T_{X}^+$ with $\psi_{z}(x) = z^n x$, $z \in \bbT$. Furthermore a standard trick with the Fejer kernel implies that the linear space generated by $D$ and all $X^{ n}$, $n\in \bbN$, is dense in $\T_{X}^+$.

Since $\A$ is isometrically isomorphic to $\T_X^+$, it inherits the action $\{ \psi_z\}_{z \in \bbT}$. And since an isometric isomorphism maps diagonals to diagonals, the fixed point algebra of that action is $\D$. We may assume that $X\subseteq \A$ and the left action on $X$ comes from the diagonal $\D$. Finally, the linear space generated by $\D$ and all $X^{ n}$, $n\in \bbN$, is dense in $\A$.

Let us say that a matrix unit $e \in \A$ is \textit{$k$-graded} if $\psi_{z}(e) = z^k e$, for some $k \in \bbZ^+_0$ and all $ z \in \bbT$. Let $\F(X)$ denote the linear space generated in $\A$ by all finitely graded matrix units.

\vspace{.05in}
\noindent \textit{Claim 1:} All matrix units in $\A$ belong to $\F(X)$.

\vspace{.05in}
\noindent Assume instead that there is a matrix unit $e \in \A_n $ with $e \notin \F(X) \cap \A_n $ and so $e \notin \F(X) \cap \A_m $, for all $m \geq n$. Since each $\F(X) \cap \A_m $ is a $\D_m$-module, $\dist(e,  \F(X) \cap \A_m) = 1$ and so
\begin{equation} \label{contradiction}
\dist(e,  \F(X)) = 1.
\end{equation}

On the other hand $X$ is inductive in $\A$ since it is a $\D$-bimodule. Thus, by this inductivity and the fact that the embeddings are regular $X$ is generated as a normed space by its matrix units. In the same way for each $n\geq 1$, $X^{n}$ is a $\D$-bimodule and again is generated by the matrix units it contains. Therefore linear combinations of finitely graded matrix units are dense, i.e., $\F(X)$ is dense in $\A$. But this contradicts (\ref{contradiction}) and proves the claim.

\vspace{.05in}

We will now use the claim in order to build a cocycle $\delta : \R(\A) \rightarrow \bbZ^+_0$. If $(x,y) \in \R(\A)$ then by definition there exists a matrix unit $e$ so that $(x, y)$ belongs to its graph $\hat{e}$. By the claim above, there exist finitely graded matrix units $e_1, e_2, \dots , e_s$ so that $e = \sum _{i=1}^s \, e_i$ and so there exists $i_0$ so that $(x, y) \in \hat{e}_{i_0}$. If $e_{i_0}$ is $k$-graded then we set $\delta(x,y) \equiv k$. Since summands of finitely graded matrix units maintain their grading from $\psi_z$,  the mapping $\delta $ is well-defined. Furthermore $\delta : \R(\A)\rightarrow \bbZ^+_0$ maintains the same value over the clopen set $\hat{e}_{i_0}$. In other words, if $\delta(x,y)= k$ for some $(x,y) \in \R(\A)$ then the same is true for a neighborhood of $(x,y)$. Hence $\delta $ is continuous.

Assume that $(x, y) , (y, z) \in \R(\A)$. If $(x, y)\in \hat{e}$ and $(y,z) \in \hat{f}$, then $(x,z) \in \widehat{ef}$. Use the claim above to write $e = \sum _{i=1}^s \, e_i$ and $f  = \sum _{j=1}^t \, f_j$ as sums of finitely graded matrix units in some $\A_m$ and so \[
ef = \sum _{i=1}^s  \sum _{j=1}^t \, e_i f_j,
\]
with each $e_if_j$ elementary. If $(x, z) \in \widehat{e_{i_0}f_{j_0}}$, then $(x, y)\in \widehat{e}_{i_0}$ and $(y,z) \in \widehat{f}_{j_0}$. Hence
\begin{align*}
z^{\delta(x,z)}e_{i_0}f_{j_0} &=\psi_z(e_{i_0}f_{j_0}) =\psi_z(e_{i_0})\psi_z(f_{j_0}) \\
&= z^{\delta(x,y)+\delta(y,z)} e_{i_0}f_{j_0},
\end{align*}
and so $\delta $ satisfies the cocycle condition.

We now show that $\delta : \R(\A)\rightarrow \bbZ^+_0$ is coherent. Suppose $(x, y) \in \R(\A)$ with $\delta(x,y) = n$, $n \geq 2$. Hence there exists a finitely graded matrix unit $e$ with $(x, y) \in \hat{e}$ and $\psi_z(e) = z^n e$. By the first paragraph of the proof and the inductivity of $X$, we can $1/2$-approximate $e$ with a sum of the form $\sum _{i=1}^s \, \lambda_i e_i$, $e_i \in X^{ n}\cap \A_m$, $i = 1, 2, \dots , s$, $m \in \bbN$; furthermore, each $e_i$ can be assumed to be a product of matrix units from $X$.  But then all the summands of $e$ in $\A_m$ have to coincide with one of the $e_i$ in the sum above. In particular, there exists $e_i$ which is a product of matrix units from $X$ so that $(x, y) \in \widehat{e_i} $. This shows that $\delta $ is coherent.

Finally assume that $\hdi : \R(\A)\rightarrow \bbZ^+_0$ is another coherent cocycle and let $(x, y) \in \R(\A)$. We claim that $\hdi(x, y) \leq \delta (x,y)$. Indeed if $\hdi(x,y) = n $, then 
\[
(x, y) = (x, x_1)(x_1, x_2)\dots(x_{n-1} , y) 
  \]
 with each one of the factors on the right side of the above equation belonging to $\hdi ^{-1}(\{1\})$; in particular $x_{i-1} \neq x_i$, for all $i=1, 2, \dots ,n$. But then the cocycle condition implies that
\[
\delta (x, y) = \delta (x, x_1)+\delta(x_1, x_2)+\dots + \delta(x_{n-1} , y) \geq n= \hdi(x, y),
  \]
  as desired. By reversing the roles of $\delta$ and $\hdi$ in the above argument we also obtain that $\hdi(x, y) \geq \delta (x,y)$, which shows the uniqueness of $\delta$.
\end{proof}



In order to characterize which limit algebras are isometrically isomorphic to tensor algebras we need two ingredients. One is of course the concept of a $\bbZ^+_0$-valued cocycle that we have been discussing so far. The other one is contained in the following definition. (Compare with \cite[Proposition 4.7]{MSMem}.)

\begin{definition}
Let $\R$ be an AF-semigroupoid on $\D^{\star}$. We say that $\R$ is a \textit{tree} if  for any three points $x, y, z \in \D^{\star}$ with $(x, y), (x, z) \in \R$, we have that $y$ and $z$ are comparable, i.e., either $(z, y) \in \R$ or $(y,z) \in \R$.
\end{definition}

We need the following elementary result.

\begin{lemma} \label{neat}
Let $(X, D)$ be a $\ca$-correspondence with $D$ an abelian $\ca$-algebra. Let $\xi, \eta \in X$ and assume that there exists a positive contraction $d  \in D$ so that $\xi d= \xi$ and $\eta d = 0$. Then, 
\[
\| \xi +\eta\|\leq \max \{\|\xi\|, \|\eta\| \}.
\]
\end{lemma}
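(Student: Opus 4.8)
The plan is to reduce everything to a statement about two positive elements of $D$ with orthogonal supports, where commutativity of $D$ does the real work. First I would dispose of the cross term $a := \sca{\xi,\eta}$. Using the module axioms, $\xi d = \xi$ forces $a = \sca{\xi d, \eta} = d\,\sca{\xi,\eta} = da$ (here $d = d^*$, as $d$ is positive), while $\eta d = 0$ forces $ad = \sca{\xi,\eta}d = \sca{\xi, \eta d} = 0$. Since $D$ is abelian, $da = ad = 0$, and together with $a = da$ this yields $a = 0$. Hence the cross terms in $\sca{\xi+\eta,\xi+\eta}$ vanish and
\[
\sca{\xi+\eta,\xi+\eta} = \sca{\xi,\xi} + \sca{\eta,\eta} =: P + Q,
\]
with $P, Q \geq 0$ in $D$.

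Next I would show that $P$ and $Q$ are orthogonal, i.e. $PQ = 0$. The same two hypotheses give $P = \sca{\xi d, \xi d} = d\,\sca{\xi,\xi}\,d = d^2 P$ and $Qd = \sca{\eta,\eta}d = \sca{\eta,\eta d} = 0$. Because $D$ is abelian these combine cleanly: $PQ = P d^2 Q = Pd\,(dQ) = 0$, using that $dQ = Qd = 0$.

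It then remains to read off the norm estimate. Passing to the Gelfand picture $D \cong c_0(\Omega)$, the relation $PQ = 0$ says that the positive functions $P$ and $Q$ have disjoint supports, so $(P+Q)(\omega) = \max\{P(\omega), Q(\omega)\}$ at every $\omega$, and therefore $\nor{P+Q} = \max\{\nor{P}, \nor{Q}\}$. Consequently
\[
\nor{\xi+\eta}_X^2 = \nor{P+Q}_D = \max\{\nor{P}_D, \nor{Q}_D\} = \max\{\nor{\xi}_X^2, \nor{\eta}_X^2\},
\]
which gives the asserted inequality — in fact an equality. All the computations are routine manipulations of the module axioms; the one genuinely essential point, and the place I expect to have to be careful, is the use of commutativity of $D$ to turn the one-sided relations ($a = da$ together with $ad = 0$, and $P = d^2 P$ together with $Qd = 0$) into the two-sided conclusions $a = 0$ and $PQ = 0$, which would fail for a noncommutative coefficient algebra.
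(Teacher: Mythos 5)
Your argument is correct. The first half coincides with the paper's: both proofs kill the cross term $\sca{\xi,\eta}$ by the same two-line computation from $\xi d=\xi$ and $\eta d=0$ (the paper writes $\sca{\xi,\eta}=\sca{\xi d,\eta}=d^*\sca{\xi,\eta}=\sca{\xi,\eta}d=\sca{\xi,\eta d}=0$, which is your $a=da$ and $ad=0$ combined via commutativity). The endgames differ. The paper never forms the product $PQ$: it writes $\sca{\xi,\xi}+\sca{\eta,\eta}=d^{1/2}\sca{\xi,\xi}d^{1/2}+(1-d)^{1/2}\sca{\eta,\eta}(1-d)^{1/2}\leq \|\xi\|^2 d+\|\eta\|^2(1-d)\leq \max\{\|\xi\|^2,\|\eta\|^2\}\,1$ and reads off the inequality directly from the order structure, with no Gelfand transform. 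You instead prove the orthogonality $PQ=0$ and pass to $D\cong C_0(\Omega)$ (note: $C_0(\Omega)$, not $c_0(\Omega)$, in general), where disjointness of supports gives $\|P+Q\|=\max\{\|P\|,\|Q\|\}$. Your route buys a slightly stronger conclusion --- equality rather than the stated inequality --- at the cost of invoking the Gelfand representation; the paper's convex-combination estimate is marginally more ``operator-theoretic'' and would survive in settings where one only has the one-sided relations needed for the inequality. Both are complete proofs of the lemma as stated; just be explicit, as you were, that commutativity of $D$ is what lets you move $d$ past $\sca{\xi,\eta}$ and $\sca{\xi,\xi}$, and (if $D$ is non-unital) that the norm computation takes place in the unitization or in $C_0(\Omega)$.
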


\begin{proof}Note that 
\[
\begin{split}
\sca{\xi , \eta}&= \sca{\xi d , \eta} = d^ *\sca{\xi , \eta} \\
                      &= \sca{\xi , \eta}d = \sca{\xi , \eta d} \\
                      & =0.
                     \end{split}
                     \]
Hence,
\[
\begin{split}
0\leq \sca{\xi + \eta , \xi +\eta}&=\sca{\xi , \xi } + \sca{\eta , \eta } =\sca{\xi , \xi d } + \sca{\eta , \eta (1-d) }\\
                                                 &=d^ {1/2}\sca{\xi , \xi }d^ {1/2} + (1-d)^{1/2}\sca{ \eta , \eta}(1-d)^{1/2} \\
                                                 &\leq\|\xi \|^2d + \|\eta\|^2(1-d) \\
                                                 &\leq \max\{\|\xi \|^2, \| \eta \|^2\} 1,
                                                 \end{split}
                                                 \]
as desired.
\end{proof}

We are in position to state and prove the first half of our main result.

\begin{theorem} \label{mainhalf}
Let $\A$ be a triangular limit algebra. If $\A$ is isometrically isomorphic to the tensor algebra of a $\ca$-correspondence, then $\R(\A)$ is a tree admitting a $\bbZ^+_0$-valued continuous and coherent cocycle.
\end{theorem}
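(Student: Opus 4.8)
The cocycle half of the statement is already available: by Theorem~\ref{main1} the isometric isomorphism $\A\cong\T_X^+$ forces $\R(\A)$ to admit a (necessarily unique) $\bbZ^+_0$-valued continuous and coherent cocycle $\delta$. Thus the only new content is that $\R(\A)$ is a \emph{tree}, and I would establish this by contradiction. As in the proof of Theorem~\ref{main1}, I may assume $X\subseteq\A$ with $D=\D$ and the gauge grading realized inside $\A$, so that the degree-one matrix units of $\A$ are exactly the matrix units lying in $X$. The plan is: assuming the tree condition fails, manufacture two \emph{distinct} degree-one matrix units $e,f\in X$ with a common range projection and orthogonal source projections, and then contradict the defining norm inequality of tensor algebras recorded in Lemma~\ref{neat}.

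For the reduction, suppose there are $x,y,z\in\D^{\star}$ with $(x,y),(x,z)\in\R(\A)$ but $y,z$ incomparable. Pick matrix units $E,F$ with $(x,y)\in\widehat E$ and $(x,z)\in\widehat F$; since $x$ lies in both $\widetilde{EE^*}$ and $\widetilde{FF^*}$, after refining to a sufficiently fine level $\A_m$ (here I use regularity of the embeddings) I may take $EE^*=FF^*$ to be a single minimal diagonal projection. Coherence of $\delta$ now lets me factor $E=E_1\cdots E_n$ and $F=F_1\cdots F_k$ into degree-one matrix units tracing the paths $x=x_0\to\cdots\to y$ and $x=x_0\to\cdots\to z$. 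If one factorization were an initial segment of the other, the shorter endpoint would sit on the longer path and force $(z,y)\in\R(\A)$ or $(y,z)\in\R(\A)$, contradicting incomparability; hence the two paths first disagree at some index $j$, where $e:=E_j$ and $f:=F_j$ are distinct degree-one matrix units with common range projection $p_a$ (the minimal projection at the shared vertex $x_{j-1}$) and distinct source projections $p_b\neq p_c$.

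Finally I would play the two norms of $e+f$ against each other. As honest matrix units of $\C_m$ with $e=e_{ab}$, $f=e_{ac}$ and common range $p_a$, one computes $(e+f)(e+f)^*=2p_a$ in $\C$, so $\|e+f\|=\sqrt2$; because $\A\hookrightarrow\C$ and $\A\cong\T_X^+$ are both isometric, this is equally the norm of $e+f$ as an element of $X$. On the other hand the purely multiplicative identities $e\,p_b=e$ and $f\,p_b=0$ hold in $\A$ and so express relations between $e,f\in X$ and the positive contraction $d=p_b\in D$; Lemma~\ref{neat} then gives $\|e+f\|\le\max\{\|e\|,\|f\|\}=1$, the desired contradiction. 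I expect the genuine obstacle to be resisting the tempting shortcut $\langle e,f\rangle=e^*f=e_{bc}\notin\D$: the adjoint internal to $\T_X$ need not agree with the adjoint in $\C$, since the isometric isomorphism $\A\cong\T_X^+$ is not assumed to intertwine the two $*$-operations, so one cannot read the inner product off the ambient product. The argument must therefore be conducted entirely through isometry-invariant data—operator norms together with the multiplicative relations $e\,p_b=e$ and $f\,p_b=0$—which is precisely the role Lemma~\ref{neat} is designed to fill.
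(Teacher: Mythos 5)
Your proposal is correct and follows essentially the same route as the paper: invoke Theorem~\ref{main1} for the cocycle, use coherence to factor the two elements $(x,y)$ and $(x,z)$ into degree-one pieces, and derive a contradiction between the ambient norm $\|e+f\|=\sqrt 2$ of two distinct degree-one matrix units with common final projection and the bound $\|e+f\|\le 1$ from Lemma~\ref{neat} applied with $d$ the source projection of $e$. The paper phrases this as an inductive proof that $y_i=z_i$ for all $i$ rather than locating a first index of disagreement, but the key step is identical, and your remark about not conflating the $\ca$-correspondence inner product with the ambient adjoint is exactly the point the paper's use of Lemma~\ref{neat} is designed to handle.
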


\begin{proof}
Let $\big\{ \A_n, \C_n, \D_n, \rho_n\big\}_{n=1}^{\infty}$ be a presentation for $\A$. Hence\break $\D= \varinjlim (\D_n, \rho_n)$ is a canonical masa of the AF $\ca$-algebra $\C= \varinjlim (\C_n, \rho_n)$ and all $\{ \A_n\}_{n=1}^{\infty}$ are digraph subalgebras of $\C_n$ with  $\A_n \cap \A_n^* = \D_n$ and $\rho(\A_n) \subseteq\rho(\A_{n+1})$ for all $n \in \bbN$. Finally, $\A= \varinjlim (\A_n, \rho_n)$.

Assume that there exists a $\ca$-correspondence $(X, D)$ and an isometric isomorphism $\phi: \T^+_X  \rightarrow \A$. Hence $\phi (D) = \D$. Theorem~\ref{main1} implies now the existence of an integer valued continuous and coherent cocycle $\delta : \R(\A) \rightarrow \bbZ^+_0$ with $\delta ^{-1}(\{ 0 \})= \D$. As we saw in the proof of Theorem~\ref{main1}, we also have $\delta ^{-1}(\{1\}) = \widehat{\phi (X)}$.

In order to show that $\R(\A)$ is a tree let $(x, y), (x, z) \in \R(\A)$. We are to show that either $(x, y)$ is a factor of $(x, z)$ or otherwise $(x, z)$ is a factor of $(x, y)$.  If either $x=y$ or $x =z$, then there is nothing to prove.  Otherwise, since $\delta $ is coherent we have 
\[
\begin{split}
(x, y) & = (x, y_1)(y_1, y_2)\dots(y_k , y) \\
 (x, z) &= (x , z_1)(z_1, z_2) \dots (z_l, z)
 \end{split}
 \]
 with each one of the factors on the right side of the above equations belonging to $\delta ^{-1}(\{1\})$. Without loss of generality assume that $k \leq l$.
 
We claim that $y_1=z_1$. Indeed,  by way of contradiction assume otherwise. Since $\delta(x, y_1)=\delta(x, z_1) =1$, it follows from the proof of Theorem~\ref{main1} that there exist matrix units $e, f \in \A_k \cap \phi(X)$, for some $k \in \bbN$, so that $(x, y_1) \in \hat{e}$ and $(x, z_1) \in \hat{f}$. Since $y_1 \neq z_1$, the matrix units $e, f$ have orthogonal initial spaces but the same final space. Hence 
\begin{equation} \label{sqrt2}
\|e+f\|= \sqrt{2}.
\end{equation}
On the other hand let $e = \phi(\xi)$ and $f = \phi(\eta)$ for some $\xi , \eta \in \D^{\star}$. Then Lemma~\ref{neat}, with $d  = \phi^{-1} ( e^*e)$, implies that 
\[
 \|e + f\| = \|\xi + \eta \|  \leq \max \{\|\xi\|, \|\eta\|\} = \max\{\|e\|, \|f\|\}= 1.
\]
This contradicts (\ref{sqrt2}) and shows that $y_1=z_1$.

Continuing in that fashion we can show now that $y_2 = z_2$, $y_3 = z_3$ and so on. This implies that $(x, y)$ is a factor of $(x, z)$ and so $\R(\A)$ is a tree.
\end{proof}

We now focus on the converse of Theorem~\ref{mainhalf}. For the rest of the section we assume that $\A$ is a triangular limit algebra so that $\R(\A)$ is a tree admitting a $\bbZ^+_0$-valued continuous and coherent cocycle $\delta : \R(\A) \rightarrow \bbZ^+_0$. In what follows $\big\{ \A_n, \C_n, \D_n, \rho_n\big\}_{n=1}^{\infty}$ will always denote a presentation for $\A$. 

We say that a matrix unit in $e \in \A$ is $1$-elementary iff $\hat{e} \subseteq \delta^ {-1}(\{1\})$. A matrix unit is said to be $k$-elementary, for some $k \geq2$, if it is the product of $k$ $1$-elementary matrix units. A matrix unit will be called elementary iff it is $k$-elementary for some $k \in \bbN$. (Compare these definitions with the second paragraph of the proof of Theorem~\ref{main1}.) The collection of all $k$-elementary matrix units is denoted as $X_k$, $ k \in \bbN$. 

\begin{lemma} \label{rightsum}
Let $\A=  \varinjlim (\A_n, \rho_n)$ be a triangular limit algebra so that $\R(\A)$ is a tree admitting a $\bbZ^+_0$-valued continuous and coherent cocycle $\delta : \R(\A) \rightarrow \bbZ^+_0$. Then any non-diagonal matrix unit in $\A$ can be written as the sum of elementary matrix units from some $\A_n$, $n \in \bbN$.
\end{lemma}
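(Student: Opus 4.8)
The plan is to show that the compact clopen graph $\hat{e}$ of a non-diagonal matrix unit $e$ can be covered by finitely many graphs of elementary matrix units lying inside $\hat{e}$, and then, by passing to a common level of the presentation where matrix-unit graphs are pairwise either equal or disjoint, to read off a decomposition of $e$ into elementary matrix units. Only the continuity and coherence of $\delta$, together with compactness, will be needed; the tree hypothesis plays no role in this particular step.

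First I would fix a level $n$ with $e \in \A_n$. Since $e$ is non-diagonal, its minimal projections $e^*e, ee^* \in \D_n$ are orthogonal, so $\widetilde{ee^*}$ and $\widetilde{e^*e}$ are disjoint compact clopen subsets of $\D^{\star}$ and $\hat{e}$, being the graph of a homeomorphism between them, is compact. Moreover every $(x,y) \in \hat{e}$ satisfies $x \neq y$, so $(x,y) \notin \widehat{\D} = \delta^{-1}(\{0\})$ and hence $\delta \geq 1$ throughout $\hat{e}$.

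The heart of the argument—and the step I expect to be the main obstacle—is the local claim that every point $w = (x,y) \in \hat{e}$ lies in $\hat{h} \subseteq \hat{e}$ for some elementary matrix unit $h$. To prove it, put $k = \delta(w) \geq 1$. Coherence factors $w = (x,x_1)(x_1,x_2)\cdots(x_{k-1},y)$ with each factor in $\delta^{-1}(\{1\})$. Each factor $(x_{i-1},x_i)$ lies in some matrix-unit graph, and since $\delta$ is continuous and integer valued, hence locally constant, for a common level $\A_N$ large enough the unique level-$N$ matrix-unit summand $g_i$ with $(x_{i-1},x_i) \in \hat{g}_i$ already satisfies $\hat{g}_i \subseteq \delta^{-1}(\{1\})$; that is, each $g_i$ is $1$-elementary. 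At level $N$ the source projection $g_i^*g_i$ and the range projection $g_{i+1}g_{i+1}^*$ are both equal to the unique minimal level-$N$ diagonal projection whose spectrum contains the intermediate point $x_i$, so they coincide automatically; thus $h := g_1 g_2 \cdots g_k$ is a genuine $k$-elementary matrix unit and $w \in \hat{h}$, because the composed partial homeomorphism carries $x \mapsto x_1 \mapsto \cdots \mapsto y$. Finally, enlarging $N$ so that $N \geq n$ and comparing $\hat{h}$ with the level-$N$ summand $\hat{f}$ of $e$ containing $w$, the equal-or-disjoint property of level-$N$ matrix-unit graphs forces $\hat{h} = \hat{f} \subseteq \hat{e}$, which proves the claim.

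With the claim in hand the proof concludes by compactness. The family $\{\hat{h}_w : w \in \hat{e}\}$ is an open cover of the compact set $\hat{e}$, so finitely many $\hat{h}_{w_1}, \dots, \hat{h}_{w_t}$ suffice. Lifting $e$ and all of the $h_{w_i}$ to a single common level $\A_N$, write $e = \sum_{\alpha} f_{\alpha}$ as a sum of level-$N$ matrix units and decompose each $\rho(h_{w_i})$ likewise. Every level-$N$ summand of $h_{w_i}$ is again elementary, since refining the product $h_{w_i} = g_1 \cdots g_{k}$ distributes into products of $1$-elementary summands of the $g_j$ (each summand remaining $1$-elementary as its graph still lies in $\delta^{-1}(\{1\})$). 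Because level-$N$ matrix-unit graphs are pairwise equal or disjoint, each atom $\hat{f}_{\alpha} \subseteq \hat{e} \subseteq \bigcup_i \hat{h}_{w_i}$ must coincide with a level-$N$ summand of some $h_{w_i}$, so each $f_{\alpha}$ is elementary. Hence $e = \sum_{\alpha} f_{\alpha}$ exhibits $e$ as a sum of elementary matrix units from $\A_N$, as desired.
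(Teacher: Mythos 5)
Your proof is correct and follows essentially the same route as the paper: coherence and continuity of $\delta$ produce, for each point of $\hat{e}$, a product of $1$-elementary matrix units at a common level whose graph equals the summand of $e$ through that point, and the full decomposition of $e$ then follows by passing to a common level (the paper phrases this as showing the elementary graphs form a basis for the topology of $\R(\A)$, leaving the compactness step implicit). The only cosmetic difference is that the paper identifies the product with the summand $ec$ of $e$ by matching initial and final projections, whereas you use the disjoint-or-equal property of same-level matrix-unit graphs.
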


\begin{proof}
It suffices to show that the graphs of all elementary matrix units form a basis for the topology of $\R(\A)$. Hence given a matrix unit $e \in \A$ and $(x, y) \in \hat{e}$, we need to produce an elementary matrix unit $g \in \A$ so that $(x, y) \in \hat{g} \subseteq \hat{e}$.

Towards this end, assume that $\delta(x, y) =k$. 
Since $\delta $ is coherent, there exist $x_1, x_2, \dots x_{k-1} \in \D^{\star}$ so that $(x_i, x_{i+1}) \in \R(\A)$ and $\delta(x_i,x_{i+1})=1$, for all $ i = 0, 1, \dots k-1$. (Here we understand that $x_0=x$ and $x_k =y$.) The continuity of $\delta $ now implies the existence of matrix units $f_1, f_2, \dots f_k$ in $\A$ so that $(x_i, x_{i+1}) \in \hat{f}_{i+1}$ for all $ i = 0, 1, \dots k-1$. Consider an $n \in \bbN$ so that $e$ and $f_1, f_2, \dots f_k$ all belong to some $\A_n$. If $c \in \D_n$ is a diagonal matrix unit with $(y,y) \in \hat{c}$, then $ec$ is a matrix unit in $\A_n$ which is a summand of $e$ and so $\widehat{ec} \subseteq \hat{e}$. On the other hand, there exist matrix units $g_1, g_2, \dots g_k$ in $\A_n$ so that $f_1f_2\dots f_k c = g_1g_2\dots g_k$; since $\hat{g_i}\subseteq \hat{f_i}$ for all $ i = 0, 1, \dots k-1$, the matrix units $g_i$ are $1$-elementary. Hence,  $g = g_1g_2\dots g_k$ is k-elementary. Furthermore, the matrix units $ec$ and $g$ of $\A_n$ have the same final projection (since $(x, x)$ belongs to both $\widehat{ec}$ and $\hat{g}$) and so they agree, i.e., $g =ec$. Conclusion: $(x, y) \in \hat{g}=\widehat{ec}\subseteq \hat{e}$ and $g$ is elementary, as desired. 
\end{proof}

In the next lemma we will be using the following fact. Let $\{e_{i,j}\}$ be a matrix unit system for some finite dimensional $\ca$-algebra and let $\S \subseteq \{e_{i,j}\}$ be some subset with the property that if $e_{i,j} \in \S$ then $e_{ii}, e_{jj} \in \S$. We denote by $G(\S)$ the directed graph whose nodes are all matrix units of the form $e_{ii} \in \S$ and the edges are determined as follows: there exists an edge from $e_{jj}$ to $e_{ii}$ if and only if $e_{ij} \in \S$. If $G(\S)$ happens to be an out-forest, then the algebra $\alg(\S)$ generated by $\S$ is completely isometrically isomorphic to the tensor algebra $\T^+_{G(\S)}$. This fact can be seen in many ways. For instance, one can verify that $\alg(\S)$ is a tree algebra in the sense of \cite[Definition 2.2]{DPP} and then appeal to \cite[Theorem 4.1]{DPP}. (See also \cite{Dav}.)

\begin{lemma} \label{Bn}
Let $\A=  \varinjlim (\A_n, \rho_n)$ be a triangular limit algebra so that $\R(\A)$ is a tree admitting a $\bbZ^+_0$-valued continuous and coherent cocycle $\delta : \R(\A) \rightarrow \bbZ^+_0$. Then there exists an ascending sequence $\{\B_n\}_{n=1}^{\infty}$ of finite dimensional subalgebras of $\A$ so that

\begin{itemize}
\item[(i)] $\bigcup_{n=1}^{\infty} \B_n= \bigcup_{n=1}^{\infty} \A_n$, and,
\item[(ii)] each $\B_n$ is completely isometrically isomorphic to the tensor algebra of an out-forest.
\end{itemize} 
\end{lemma}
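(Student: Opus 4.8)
The plan is to take $\B_n$ to be the subalgebra of $\A_n$ generated by the diagonal matrix units together with the $1$-elementary matrix units it contains, and then to read off (ii) from the graph-theoretic fact recalled just before the statement. Concretely, for each $n$ put
\[
\S_n = \D_n \cup \{\, e \in \A_n : e \text{ is a } 1\text{-elementary matrix unit}\,\}
\]
(where we identify $\D_n$ with the set of its diagonal matrix units) and set $\B_n = \alg(\S_n)$. Because the initial and final projections $e^*e$ and $ee^*$ of any $1$-elementary matrix unit $e \in \A_n$ are diagonal matrix units, and hence lie in $\S_n$, the set $\S_n$ satisfies the closure hypothesis of that fact. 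So as soon as I show that the associated digraph $G(\S_n)$ is an out-forest, the fact yields that $\B_n$ is completely isometrically isomorphic to $\T^+_{G(\S_n)}$, which is exactly (ii).

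The heart of the argument, and the step I expect to be the main obstacle, is verifying that $G(\S_n)$ is an out-forest, and this is precisely where the tree hypothesis on $\R(\A)$ is consumed. I would first check that no vertex of $G(\S_n)$ receives two edges. If $e, f \in \S_n$ were distinct $1$-elementary matrix units with the same final projection but distinct initial projections, I pick $(x,y) \in \hat{e}$ and $(x, y') \in \hat{f}$ sharing the first point $x$; then $y \neq y'$ while $\delta(x,y) = \delta(x,y') = 1$. The tree property makes $y$ and $y'$ comparable, say $(y, y') \in \R(\A)$, and the cocycle identity forces $1 = \delta(x, y') = \delta(x, y) + \delta(y, y') = 1 + \delta(y, y')$, so $\delta(y, y') = 0$ and hence $y = y'$, a contradiction (the reverse comparability is symmetric). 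Thus every vertex has in-degree at most one. Next I would rule out directed cycles: a directed cycle in $G(\S_n)$ is a string of $1$-elementary matrix units whose product is a nonzero diagonal matrix unit, but such a product of length $r \geq 1$ is $r$-elementary and so takes the value $r$ under $\delta$ on its graph, contradicting $\delta^{-1}(\{0\}) = \widehat{\D}$. Finally, a finite digraph with in-degree at most one and no directed cycle is a disjoint union of out-trees: following the unique incoming edge backward from any vertex terminates, by acyclicity, at a unique in-degree-zero root, and uniqueness of incoming edges gives a unique directed path from that root to each vertex. Hence $G(\S_n)$ is an out-forest.

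It remains to establish (i) and that the $\B_n$ ascend. For ascendancy, I note that each embedding $\rho_n$ is regular, so it carries a $1$-elementary matrix unit $e \in \A_n$ to a sum of matrix units of $\A_{n+1}$ whose graphs partition $\hat{e} \subseteq \delta^{-1}(\{1\})$; each summand therefore has graph inside $\delta^{-1}(\{1\})$ and is again $1$-elementary, while diagonal matrix units likewise map to sums of diagonal matrix units. Thus $\S_n \subseteq \B_{n+1}$ and so $\B_n \subseteq \B_{n+1}$. For (i), the inclusion $\bigcup_n \B_n \subseteq \bigcup_n \A_n$ is immediate since $\B_n \subseteq \A_n$. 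For the reverse inclusion I invoke Lemma~\ref{rightsum}: any non-diagonal matrix unit $e \in \A_n$ is a finite sum of elementary matrix units from some $\A_m$, and each such elementary matrix unit is by definition a product of finitely many $1$-elementary matrix units, which may all be taken in a common level $\A_{m'}$, so it belongs to $\B_{m'}$; hence $e \in \B_{m'}$. Together with $\D_n \subseteq \B_n$ this gives $\bigcup_n \A_n \subseteq \bigcup_n \B_n$, completing the proof.
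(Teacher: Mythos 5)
Your proposal is correct and follows essentially the same route as the paper: the same choice of $\B_n$ as the algebra generated by $\D_n$ and the $1$-elementary matrix units of $\A_n$, the same use of the tree hypothesis plus the cocycle identity to bound in-degrees by one, the same cocycle-based exclusion of cycles (you phrase it via the value $r$ of $\delta$ on an $r$-fold product landing in $\widehat{\D}$, the paper via $0=\delta(x,y)+\delta(y,x)\geq 2$, which is the same contradiction), and the same appeal to Lemma~\ref{rightsum} for property (i) and to regularity of the embeddings for ascendancy. No gaps.
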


\begin{proof}
For each $ n \in \bbN$, let $\B_{n, 1}$ be the collection of all $1$-elementary matrix units in $\A_n$ and let $\B_n$ be the finite dimensional subalgebra of $\A_n$ generated by $\B_{n, 1}$ and $\D_n$. Any $1$-elementary matrix unit $e \in A_n$ can expressed as a sum $ \sum e_j$ of matrix units in $ e_j \in \A_{n+1}$;  since $\hat{e_j} \subseteq \hat{e}$, all these $e_j$ are also $1$-elementary. This shows that the sequence $\{\B_n\}_{n=1}^{\infty}$ is ascending.

By Lemma~\ref{rightsum}, any matrix unit in $\A$ is either diagonal and so it belongs to some $\D_n$, $n \in \bbN$, or otherwise, it is a sum of elementary units in some $\A_n$, $n \in \bbN$, and so it belongs to the algebra generated by $\B_{n, 1}$. This shows that $\cup_{n=1}^{\infty} \B_n$ contains all matrix units in $\A$ and so $\cup_{n=1}^{\infty} \B_n= \cup_{n=1}^{\infty} \A_n$.

It remains to show that each $\B_n$ is completely isometrically isomorphic to the tensor algebra of a tree. Fix an $n \in \bbN$ and let $G_n$ be the finite graph with nodes the matrix units in $\D_n$ and (directed) edges the elements of $\B_{n, 1}$, i.e., $G_n = G(\D_n \cup \B_{n, 1})$. Clearly $\B_n =\alg(G_n)$. If we show that $G_n$ is a directed tree where no vertex receives more than one edge, then the previous discussion will imply that $\B_n \simeq \T^+_{G_n}$.

First notice that no vertex of $G_n$ receives more than one edge. Indeed, assume to the contrary that there exists node $c \in D_n$ which receives two distinct edges $e_1$ and $e_2$. This implies that there exist \textit{distinct} $x, y, z \in \D^{\star}$ so that $(x, y) \in \hat{e_1}$ and $(x, z) \in \hat{e_2}$. Since $\R(\A)$ is a tree either $(y, z)$ or $(z, y)$ belongs to $\R(\A)$, say $(y, z) \in \R(\A)$. But then the cocycle condition implies 
\[
1= \delta(x,z)=\delta(x,y)+\delta(y,z)= 1+\delta(y, z)
\]
and so $\delta(y, z)=0$. Since $\delta ^{-1}(\{0\}) = \widehat{\D}$, we obtain $y =z$, which is a contradiction. 

Having established that no vertex of $G_n$ receives more than one edge, we can now see that $G_n$ has no (undirected) cycles. Indeed if such a cycle existed then it would also have to be directed cycle or otherwise  a vertex of $G_n$ would receive two edges. But then the presence of such a directed cycle would imply that $\A$ contains both a matrix unit and its adjoint. 
Which in turn implies that there exist distinct $x , y \in \D^{\star}$ so that $(x,y), (y,x)\in \R(\A)$. But then, by the cocycle condition
\[
0= \delta(x, x)=\delta(x,y)+\delta(y,x)\geq2
\]
which is a contradiction.
Therefore, the proof is established.
\end{proof}

For the proof of the main theorem below, we also need a result from \cite{KR} which we have labeled there as the Extension Theorem. Below we just state a special case of it. 

\begin{lemma} \label{extthm}
Let $\D \subseteq \C$ be an inclusion of $\ca$-algebras and let $X \subseteq  \C$ be a closed $\D$-bimodule with $X^*X\subseteq \D$. If $\A = \overline{\alg}(X\cup \D)$ and $s \in l^2 (\bbN)$ denotes the forward shift, then the following are equivalent
\begin{itemize}
\item[(i)] $\A$ is completely isometrically isomorphic to the tensor algebra \break $\T_{(X, \D ) }^+$ via a map that sends generators to generators.
\item[(ii)] The association
\begin{equation} \label{extensioncriterion}
\begin{aligned}
\D \ni d &\longrightarrow d \otimes 1, \\
X \ni  \xi &\longrightarrow \xi \otimes s
\end{aligned}
\end{equation}
extends to a well defined completely isometric map on $\alg (X \cup \D)$.
\end{itemize}
\end{lemma}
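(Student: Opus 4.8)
The statement is the special case of the Extension Theorem of \cite{KR}, and both implications are governed by a single auxiliary map which I would set up first. Let $s$ act on $\ltwo(\bbN)$ as the forward shift, so $s^*s=1$, and consider the pair with values in $\C\otimes C^*(s)$ given by
\[
\pi\colon \D \ni d \longmapsto d\otimes 1, \qquad t\colon X \ni \xi \longmapsto \xi \otimes s .
\]
The relations $s^*s=1$ and $X^*X\subseteq\D$ show at once that $(\pi,t)$ is an injective Toeplitz representation of $(X,\D)$: one has $t(\xi)^*t(\eta)=\xi^*\eta\otimes s^*s=\sca{\xi,\eta}\otimes 1=\pi(\sca{\xi,\eta})$ and $\pi(d)t(\xi)=d\xi\otimes s=t(\phi(d)\xi)$. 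By the universal property of the tensor algebra this integrates to a completely contractive homomorphism $\rho\colon \T^+_{(X,\D)}\to\C\otimes C^*(s)$ sending generators to generators, with dense range in the closed algebra generated by $\{d\otimes 1\}\cup\{\xi\otimes s\}$. On the other hand the inclusions $\D,X\hookrightarrow\C$ form a Toeplitz representation which integrates to the canonical completely contractive homomorphism $q\colon \T^+_{(X,\D)}\to\A$ with dense range, again generators to generators. The map in (ii) is then exactly $\rho\circ q^{-1}$ read on the dense subalgebra $\alg(X\cup\D)$.

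The heart of the matter is the claim that \emph{$\rho$ is a complete isometry}. The key observation is that, since $X^*X\subseteq\D$ and the left action is multiplication in $\C$, the multiplication map $m_n\colon X^{\otimes n}\to\C$, $\xi_1\otimes\cdots\otimes\xi_n\mapsto\xi_1\cdots\xi_n$, is a well-defined $\D$-bimodule map with $\sca{\zeta,\zeta'}=m_n(\zeta)^*m_n(\zeta')$ for the interior tensor-power inner product; hence $m_n$ is isometric. Thus the shift $s$ serves only to \emph{separate the gradings}: homogeneous components of different degrees land in distinct spectral subspaces of the gauge action $\id\otimes\beta_z$ on $\C\otimes C^*(s)$ (with $\beta_z(s)=zs$), while within a fixed degree no collapse of norms can occur.

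To convert this into a norm estimate I would fix a faithful nondegenerate representation $\C\subseteq\L(\H_0)$ and represent $\C\otimes C^*(s)$ on $\H_0\otimes\ltwo(\bbN)=\bigoplus_{n\ge 0}\H_0$. The isometries $m_n$ induce level-wise isometries $V_n\colon X^{\otimes n}\otimes_\D\H_0\to\H_0$, $\zeta\otimes h\mapsto m_n(\zeta)h$, and hence an isometry $V=\bigoplus_n V_n$ from the induced Fock space $\F(X)\otimes_\D\H_0$ onto a closed subspace of $\H_0\otimes\ltwo(\bbN)$. A direct check on generators gives the intertwining $\rho(a)\,V=V\,\tilde\pi(a)$, where $\tilde\pi$ is the Fock representation of $\T^+_{(X,\D)}$ induced from $\H_0$. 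Since the induced Fock representation is completely isometric and $V$ is an isometry, amplifying to matrices yields $\nor{a}=\nor{\tilde\pi^{(k)}(a)}\le\nor{\rho^{(k)}(a)}\le\nor{a}$, so $\rho$ is completely isometric. I expect this step — checking that the Fock representation stays completely isometric after inducing and that it is a compression of $\rho$ — to be the main technical point, though it is by now routine for tensor algebras.

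Granting that $\rho$ is a complete isometry, the equivalence follows formally. The map in (ii) is well defined and completely isometric precisely when $\ker q\subseteq\ker\rho$ and $\nor{\rho^{(k)}(w)}=\nor{q^{(k)}(w)}$ for every matrix $w$ of polynomials in the generators; because $\rho$ is completely isometric (in particular injective), this holds if and only if $q$ is completely isometric, that is, if and only if $\A$ is completely isometrically isomorphic to $\T^+_{(X,\D)}$ via a generators-to-generators map. This is exactly the equivalence of (i) and (ii).
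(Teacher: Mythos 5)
The paper does not actually prove this lemma: it is quoted as a special case of the Extension Theorem of \cite{KR} and used as a black box, so there is no internal argument to compare yours against. Judged on its own terms, your proof is correct and is the argument one would expect to underlie the cited result. The decisive point, which you isolate accurately, is that the Toeplitz representation $d\mapsto d\otimes 1$, $\xi\mapsto\xi\otimes s$ integrates to a \emph{completely isometric} copy $\rho$ of $\T^+_{(X,\D)}$ inside $\C\otimes\ca(s)$; once that is known, both (i) and (ii) reduce to the statement that the canonical surjection $q:\T^+_{(X,\D)}\to\A$ is completely isometric, and the equivalence is formal. Your verification of the key point is sound: the hypothesis $X^*X\subseteq\D$ is exactly what makes the multiplication maps $m_n:X^{\otimes n}\to\C$ inner-product preserving, the operators $V_n(\zeta\otimes h)=m_n(\zeta)h$ are then isometries, and the intertwining $\rho(a)V=V\tilde\pi(a)$ checked on generators forces $\|\tilde\pi^{(k)}(a)\|\le\|\rho^{(k)}(a)\|\le\|a\|$, with equality since the induced Fock representation is completely isometric on the tensor algebra (faithfulness of the Fock realization of $\T_X$ plus faithfulness of representations induced from a faithful representation of $\D$). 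The only places where you are terse --- that the map in (ii) must be read as an algebra homomorphism for ``extends'' to make sense, and the routine Fowler--Raeburn-type faithfulness of the induced Fock representation --- are genuinely routine and do not constitute gaps.
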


\begin{theorem} \label{main}
Let $\A$ be a triangular limit algebra. Then $\A$ is isometrically isomorphic to the tensor algebra of a $\ca$-correspondence if and only if $\R(\A)$ is a tree admitting a (necessarily unique) $\bbZ^+_0$-valued continuous and coherent cocycle.
\end{theorem}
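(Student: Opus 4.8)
The plan is to get the forward implication for free and to spend all the effort on the converse. That an isometric isomorphism $\A \cong \T^+_X$ forces $\R(\A)$ to be a tree carrying a $\bbZ^+_0$-valued continuous and coherent cocycle is exactly Theorem~\ref{mainhalf}, and the clause ``necessarily unique'' was already established in Theorem~\ref{main1}. So the only thing left to prove is that whenever $\R(\A)$ is a tree admitting a $\bbZ^+_0$-valued continuous coherent cocycle $\delta$, the algebra $\A$ is (completely) isometrically isomorphic to a tensor algebra.

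For the converse I would first manufacture the candidate correspondence. Let $X \subseteq \A$ be the closed $\D$-bimodule generated by the $1$-elementary matrix units, that is, by the matrix units $e$ with $\hat e \subseteq \delta^{-1}(\{1\})$. Since $\A$ is triangular we have $X^*X \subseteq \D$, so $(X,\D)$ is a $\ca$-correspondence. The next step is to recognize $\A$ as the closed algebra generated by $X$ and $\D$: by Lemma~\ref{rightsum} every non-diagonal matrix unit of $\A$ is a finite sum of elementary matrix units lying in some $\A_n$, and every elementary matrix unit is by definition a product of $1$-elementary matrix units, hence lies in $\alg(X)$. Together with the diagonal this gives $\A = \overline{\alg}(X \cup \D)$, exactly the form demanded by the Extension Theorem.

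With $\A$ written this way, the whole problem collapses to verifying criterion (ii) of Lemma~\ref{extthm}: that the assignment $d \mapsto d \otimes 1$, $\xi \mapsto \xi \otimes s$ extends to a completely isometric map on $\alg(X \cup \D)$. Here Lemma~\ref{Bn} is the engine. It supplies an ascending sequence $\{\B_n\}$ with $\overline{\cup_n \B_n} = \A$ and each $\B_n$ completely isometrically isomorphic to the tensor algebra $\T^+_{G_n}$ of an out-forest; concretely $\B_n = \overline{\alg}(X_n \cup \D_n)$, where $X_n = \spn \B_{n,1}$ is spanned by the $1$-elementary matrix units of $\A_n$. Applying the implication (i)$\Rightarrow$(ii) of Lemma~\ref{extthm} to each $\B_n$ — legitimate precisely because $\B_n$ is already a tensor algebra over $(X_n,\D_n)$ — shows that $d \mapsto d \otimes 1$, $\xi \mapsto \xi \otimes s$ is completely isometric on each $\alg(X_n \cup \D_n)$. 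These maps are given by a single formula and the data nest ($X_n \subseteq X_{n+1}$, $\D_n \subseteq \D_{n+1}$, same shift $s$), so they agree on overlaps and patch to one map on $\cup_n \alg(X_n \cup \D_n) = \alg(X \cup \D)$. Any finite matrix over this union has all entries in a common $\B_N$, so complete isometry on each floor upgrades to complete isometry on the union and then, by continuity, to $\A$. Criterion (ii) holds, and Lemma~\ref{extthm} yields $\A \cong \T^+_{(X,\D)}$, completing the proof.

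The step I expect to be the main obstacle is precisely this passage of criterion (ii) from the finite-dimensional building blocks to the limit. The rule $\xi_1 \cdots \xi_k \mapsto (\xi_1 \cdots \xi_k) \otimes s^k$ is well defined only if the word length $k$ is genuinely an invariant of the element, i.e. a product of $k$ factors from $X$ can never coincide with a product of $l \neq k$ factors. This degree bookkeeping is exactly what the coherent cocycle $\delta$ delivers: it assigns every elementary matrix unit the unambiguous degree equal to its number of $1$-elementary factors, so the gauge gradings on the pieces $\B_n$ are mutually compatible and survive the direct limit. Checking this compatibility with care — so that the patched map is genuinely well defined and completely isometric, rather than merely isometric floor by floor — is where the argument must be most delicate.
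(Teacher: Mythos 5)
Your route for Theorem~\ref{main} is essentially the paper's own: the forward implication and the uniqueness clause are quoted from Theorem~\ref{mainhalf} and Theorem~\ref{main1}, and for the converse you set $X=\overline{[X_1]}$, identify $\A$ with $\overline{\alg}(X\cup\D)$ via Lemma~\ref{rightsum}/Lemma~\ref{Bn}, and verify criterion (\ref{extensioncriterion}) of Lemma~\ref{extthm} by applying the implication (i)$\Rightarrow$(ii) to each out-forest tensor algebra $\B_n$ and patching; the compatibility $\phi_{n+1}\mid_{\B_n}=\phi_n$ holds because every element of $\B_{n,1}$ is a sum of elements of $\B_{n+1,1}$, exactly as in the paper.

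There is, however, one genuinely wrong justification. You assert that $X^*X\subseteq\D$ ``since $\A$ is triangular.'' Triangularity does not give this: in the upper triangular $3\times 3$ matrices (a triangular algebra) one has $e_{12}^*e_{13}=e_{21}e_{13}=e_{23}$, a non-diagonal matrix unit, so a $\D$-bimodule spanned by matrix units of a triangular algebra need not satisfy $X^*X\subseteq\D$. What makes the claim true for the module of $1$-elementary matrix units is the tree property together with the cocycle: if two distinct $1$-elementary matrix units $e_1,f_1\in X_1\cap\A_n$ shared a range projection, the corresponding vertex of $G_n$ would receive two edges, which the argument in the proof of Lemma~\ref{Bn} excludes (it would produce comparable points $y\neq z$ with $\delta(y,z)=0$, contradicting $\delta^{-1}(\{0\})=\widehat{\D}$). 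Hence $e_1^*f_1=0$ or $e_1=f_1$, and only then does $X^*X\subseteq\D$ follow. With that step repaired, your argument coincides with the paper's proof.
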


\begin{proof}
One direction of the theorem follows from Theorem~\ref{mainhalf}. In order to verify the other direction, assume that  $\R(\A)$ is a tree admitting a $\bbZ^+_0$-valued continuous and coherent cocycle.

Let $X = \overline{[ X_1]}$, where $X_1$ denotes as before the set of all $1$-elementary matrix units. Clearly $X\subseteq \C$ is a $\D$ bimodule. We claim that $X^*X \subseteq \D$. Indeed if $e, f \in X$ are matrix units then there exist matrix units $e_1, f_1 \in X_1\cap\A_n$, for some $n \in \bbN$, so that $e^*f=e^*_1f_1$. However the matrix units in  $X_1\cap\A_n = \B_{n,1}$ form the edges of the graph $G_n$ where no vertex receives more than one edge. (See the proof of Lemma~\ref{Bn}.) Hence either 
$e^*_1f_1 = 0$ or otherwise $e_1=f_1$ and so 
\[
e^*f=e^*_1f_1= e^*_1e_1 \in \D.
\]
This suffices to show that $X^*X \subseteq \D$.

By Lemma~\ref{Bn} we also have that $\overline{\alg}(X, \D) = \A$. Hence Lemma~\ref{extthm} will imply that $\A = \T^+_{(X, \D)}$ as soon as we verify (\ref{extensioncriterion}). This is done as follows.
Since each $\B_n$ is the tensor algebra of a graph, Lemma~\ref{extthm} shows that the association
\begin{equation*} 
\begin{aligned}
\D_n \ni d &\longrightarrow d \otimes 1, \\
\B_{n,1} \ni  e &\longrightarrow e\otimes s
\end{aligned}
\end{equation*}
extends to a completely isometric map $\phi_n$ defined on $\B_n$. Furthermore since each element in $\B_{n,1}$ can be written as a sum of elements in $\B_{n+1, 1}$, we have ${\phi_{n+1}} \mid_{\B_n}=\phi_n$, for all $n \in \bbN$. Hence we obtain a completely isometric map $\phi$ defined on 
\[
\overline{\cup_{n=1}^{\infty} \B_n}= \overline{\cup_{n=1}^{\infty} \A_n} = \A
\]
and satisfying (\ref{extensioncriterion}), as desired.
\end{proof}

\begin{remark}
Theorem \ref{main} also clarifies another issue in the literature. In \cite{DonH} Donsig and Hopenwasser study non-selfadjoint (analytic) subalgebras of $\ca$-crossed products by partial actions. Their theory incudes as examples many familiar non-selfadjoint operator algebras, including the standard and refinement limit algebras. Even though the $\ca$-crossed products by partial actions can be described as Cuntz-Pimsner  algebras of $\ca$-correspondences, Theorem  \ref{main} says that their analytic subalgebras may fail to be tensor algebras. This is a new phenomenon due to the partial action, since the analytic subalgebras of $\ca$-crossed products by automorphisms are always tensor algebras.
\end{remark}


\section{Special cases and applications}
We address now some of the consequences of Theorem~\ref{main}. In \cite{DKDoc} Davidson and Katsoulis raised the question whether the class of semi-Dirichlet algebras coincides with the class of tensor algebras of $\ca$- correspondences. The question was actually open even for the subclass of Dirichlet algebras and it was settled in the negative by Kakariadis \cite{Kak} who produced various examples of Dirichlet function algebras which are not \textit{completely} isometrically isomorphic to tensor algebras. We now use Theorem~\ref{main} to produce additional examples of Dirichlet algebras, which actually fail to be isomorphic to tensor algebras even by isometric isomorphisms. The second author also echoed these questions in \cite{Ram}.

First, let us recall the important definitions. An operator algebra $\A$ is a \textit{Dirichlet algebra} if $\A + \A^*$ is dense in $\cenv(\A)$ and we say that $\A$ is a \textit{semi-Dirichlet algebra} if $\A^*\A \subset \overline{\A + \A^*}$ when considered as a subalgebra of $\cenv(\A)$. This language is logical as it was proven in \cite{DKDoc} that $\A$ is Dirichlet if and only if $\A$ and $\A^*$ are semi-Dirichlet.

 Recall as well that a (regular) limit algebra $\A = \varinjlim ( \A_n, \rho_n)$ is said to be a \textit{full nest algebra}, if each $\A_n$ is isomorphic to the $k_n \times k_n$ upper triangular matrices $\T_{k_n}$ and the (regular, $*$-extendable) embeddings $\rho_n$ satisfy $\rho_n (\lat \A_n) \subseteq \lat (\A_{n+1})$ (such embeddings are called \textit{nest embeddings}). The prototypical example of a full nest algebra is the \textit{refinement limit algebra} (see Example~\ref{tra}) but there are many more. They were first studied in \cite{HP}.

\begin{proposition} \label{counterexample}
A full nest algebra $\A$ is a Dirichlet algebra which is not isometrically isomorphic to the tensor algebra of any $\ca$-correspondence.
\end{proposition}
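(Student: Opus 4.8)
The plan is to establish the two claims separately, the first being routine and the second the substance of the proposition.

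\emph{Dirichlet property.} Writing $\big\{\A_n,\C_n,\D_n,\rho_n\big\}_{n=1}^{\infty}$ for a presentation with $\A_n\cong\T_{k_n}$, I would simply observe that the upper and lower triangular matrices together span all of $M_{k_n}(\bbC)$, so $\A_n+\A_n^*=\C_n$. Since $\cenv(\A)=\C=\overline{\cup_n\C_n}$ for a triangular limit algebra, the subspace $\A+\A^*$ contains $\cup_n(\A_n+\A_n^*)=\cup_n\C_n$ and is therefore dense in $\cenv(\A)$; this is exactly the Dirichlet condition.

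\emph{Not a tensor algebra.} Here I would argue by contradiction through Theorem~\ref{main}. First I would record the structure of $\R(\A)$: the nest condition $\rho_n(\lat\A_n)\subseteq\lat\A_{n+1}$ forces each embedding to carry the rank-$j$ nest projection of $\A_n$ to the rank-$m_nj$ nest projection of $\A_{n+1}$, where $m_n$ is the multiplicity; consequently each diagonal atom of $\D_n$ splits into $m_n$ consecutive, order-preserved atoms of $\D_{n+1}$. This equips $\D^{\star}$ with a total order for which $\R(\A)=\{(x,y):x\le y\}$, because in each $\T_{k_n}$ every ordered pair of atoms is joined by a matrix unit. In particular $\R(\A)$ is trivially a tree, so by Theorem~\ref{main} the only possible obstruction is the cocycle.

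Suppose then that $\delta:\R(\A)\to\bbZ^+_0$ is a continuous coherent cocycle with $\delta^{-1}(\{0\})=\widehat{\D}$. Since $\A$ is an infinite-dimensional limit, $k_n\to\infty$ and hence $m_n\ge 2$ for infinitely many $n$. I would exploit the block refinement to show that the order is densely subdividable: choosing a level at which the chain has at least three atoms, I can select $x,y\in\D^{\star}$ lying in non-adjacent atoms, and then the count $c_n$ of atoms strictly between the level-$n$ atoms of $x$ and $y$ satisfies $c_{n+1}\ge m_{n+1}c_n$, so $c_n\to\infty$. For each such $n$ this produces intermediate points $x=w_0<w_1<\dots<w_{c_n+1}=y$ with every $(w_i,w_{i+1})\in\R(\A)$ off-diagonal, whence $\delta(w_i,w_{i+1})\ge 1$ and the cocycle identity gives $\delta(x,y)\ge c_n+1$. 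Letting $n\to\infty$ contradicts the finiteness of $\delta(x,y)$, so no such cocycle exists and $\A$ is not a tensor algebra.

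The main obstacle, and the only step demanding real care, is the structural claim that the nest condition forces the order-preserving block refinement of the diagonal; once that is in hand, the divergence $c_n\to\infty$ and the resulting cocycle contradiction are immediate. I would also confirm the two harmless points invoked above, namely that $\cenv(\A)=\C$ for a triangular limit algebra and that a genuine infinite-dimensional limit forces $m_n\ge 2$ infinitely often, so that the chosen pair $(x,y)$ truly admits unboundedly many subdivisions.
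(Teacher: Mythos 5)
Your proof is correct, but it takes a genuinely different route from the paper's. The paper argues in two lines: every non-diagonal matrix unit of a full nest algebra refines, at a deeper level, into a sum of matrix units at least one of which factors as a product of two distinct non-diagonal matrix units; hence no matrix unit $e$ can satisfy $\delta(\hat{e})\subseteq\{1\}$, whereas the proof of Theorem~\ref{main1} shows that such matrix units (the $1$-graded generators of $X$) must exist whenever $\A$ is isometrically isomorphic to a tensor algebra. You instead rule out the cocycle globally: using the order-preserving block refinement of the diagonal forced by the nest condition, you show the number of atoms strictly between a fixed non-adjacent pair $(x,y)$ grows without bound, so the cocycle identity together with $\delta^{-1}(\{0\})=\widehat{\D}$ forces $\delta(x,y)\ge c_n+1\to\infty$. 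Your version is more self-contained --- it needs only the statement of Theorem~\ref{main} (indeed it does not even use continuity or coherence of the putative cocycle), while the paper's leans on an internal detail of the proof of Theorem~\ref{main1}; the price is that you must establish the refinement structure of $\D^{\star}$ and verify that the chain of intermediate points $w_0,\dots,w_{c_n+1}$ obtained by following the partial homeomorphisms from $x$ terminates exactly at $y$ (it does, because in $\T_{k_n}$ there is a unique matrix unit joining each ordered pair of diagonal atoms and the product of the consecutive ones is the matrix unit whose graph contains $(x,y)$); you should also write $m_n$ rather than $m_{n+1}$ for the multiplicity governing the passage from level $n$ to level $n+1$. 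You also supply the (easy, and omitted by the paper) verification of the Dirichlet property, which is fine since $\C$ is UHF, hence simple, so $\cenv(\A)=\C$.
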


\begin{proof} Any non-diagonal matrix unit in a full nest algebra $\A$ can be written as a sum of matrix units where at least one of them is the product of two distinct non-diagonal matrix units. Hence $\A$ cannot support any integer valued cocycle $\delta: \R(\A)\rightarrow \bbZ^+_0$ satisfying $\delta(\hat{e}) \subseteq \{1\}$ for some non-diagonal matrix unit $e \in \A$. However as we saw in the proof of Theorem~\ref{main1} such a cocycle and matrix units do exist for an $\A$ isometrically isomorphic to a tensor algebra.
\end{proof}

Of course there are many more counterexamples: the alternation algebras, the algebras $\A(\bbQ)$ \cite{Powlex} and many more. All these counterexamples contradict the intuition coming from our finite dimensional experience, where all maximal triangular algebras are actually tensor algebras.

Next we want to elaborate further on the refinement embedding and produce additional examples of limit algebras with interesting properties. For our next class of examples, the graphs that we will be considering are out-trees. This allows us to move freely between the two concepts. Note that these are the only graphs with satisfy such a property and at the same time their $\ca$-envelope is a full matrix algebra.

\begin{example}[Tree-refinement algebras] \label{tra}
Let $G$ be any out-tree and let $\{k_n\}$ be a sequence of positive integers with $k_1=|G^0|$.  The \textit{tree refinement algebra} $\A\big(G, \{k_n\}\big)=\varinjlim (\T_{G_n}^+, \sigma_n)$ is a direct limit of graph algebras, which we now describe. 

Let $\{ e_{ij}^n\}$ be a matrix unit system for $ M_{k_n}$ and let $\sigma_n: M_{k_n}\rightarrow M_{k_{n+1}}$ be the refinement embedding, $n \in \bbN$. For notational simplicity we write 
\begin{equation} \label{esum}
\sigma_n(e_{i,j}^n) = \sum_s \, e_{(i, s) (j,s)}^{n+1}\in M_{k_{n+1}}(\bbC),
\end{equation}
i.e., $e_{(i, s) (j,s)}^{n+1} = e_{(i-1)l_n +s,  (j-1)l_n +s}^{n+1}$, where $l_n=k_{n+1}/k_n$.

Let $G_1=G$ and assume that each one of the edges of $G_1$ corresponds uniquely to a matrix unit $e_{ij}^1$ with $i\neq j$, i.e., $G_n^1\subseteq \{e_{i,j}^1\}_{i\neq j}$, and similarly, $G_n^0\subseteq \{e_{ii}\}$, so that these matrix units allow us to represent $\T^+_{G_1}$ faithfully inside $M_{k_1}(\bbC)$. We construct now inductively out-trees $G_2, G_3 \dots $, which we call \textit{ampliations} of $G$, under the following scheme. 

Assume that $G_n$ has been constructed so that each of the edges of $G_n$ corresponds uniquely to a matrix unit $e_{ij}^n$ with $i\neq j$, i.e., $G_n^1\subseteq \{e_{i,j}^+\}_{i\neq j}$, and similarly, $G_n^0\subseteq \{e_{ii}^n\}$. Once again these matrix units allow us to consider $\T^+_{G_n}$ inside $M_{k_n}(\bbC)$. We consider now the graph $G_{n+1}$ so that the diagonal matrix units of $M_{k_{n+1}}(\bbC)$ will comprise the vertex set $G_{n+1}^0$ and the edge set $G_{n+1}^{1}$ consists of the following matrix units 
\begin{itemize}
\item $e_{(i, s+1) (i,s))}^{n+1}$, $1\leq i \leq k_n$ and $1 \leq s\leq k-1$
\item $e_{(i, 1) (j,l_n)}^{n+1}$, provided that $e_{ij}^n \in \T_{G_n}^+$, 
\end{itemize}
where $l_n=k_{n+1}/k_n$. Pictorially, the vertices $j,i$ and the edge $e_{i,j}^n$ of $G_n$ are ``being replaced" in $G_{n+1}$ by the vertices and edges appearing below 

\vspace{.05in}

\[
\xymatrix{{(j,1)} \ar[r] & {(j,2)} \ar[r] &\dots \ar[r] &{(j,k)} \ar[dlll]  \\
{(i,1)} \ar[r]  &{(i,2)} \ar[r] & \dots \ar[r] &{(i,k)} }
\]

\vspace{.1in}

\noindent For instance, if $G_n$ is the $\Lambda$-graph

\vspace{.05in}

\[
\xymatrix{& 1 \ar[dl] \ar[dr]& \\ 
2 & &3}
\]

\vspace{.05in}

\noindent and $\sigma_n$ is the refinement embedding of multiplicity $2$, then $G_{n+1}$ will be the graph

\vspace{.1in}

\[
\xymatrix{&&(1,1) \ar[d]&&\\
&&(1,2) \ar[dl] \ar[dr]&&\\
&(2,1) \ar[dl]&&(3,1) \ar[dr]&\\
(2,2)& &&&(3,2)}
\]

\vspace{.05in}

It is easy to see that $\sigma_n(\T_{G_n}^+)\subseteq \T_{G_{n+1}}^+$. Indeed, if $\sigma(e_{i,j}^n )$ is written as in (\ref{esum}), then 
\[
e_{(i, s) (j,s)}^{n+1} =  \big( \prod_{z=1}^{s-1} \, e_{(i,z+1) (i,z)}^{n+1}\big) e_{(i, 1)(j,k)}^{n+1} \big( \prod_{w=s}^{k-1} \, e_{(j,w+1) (j,w)}^{n+1} \big) \in  \T_{G_{n+1}}^+.
\]
Therefore we have a directed limit $\A=  \varinjlim (\T_{G_n}^+, \sigma_n)$; the resulting limit algebra is the tree-refinement algebra $\A\big(G, \{k_n\}\big)$.
\end{example}

In the case where $G_1$ is a total ordering, the resulting tree-refinement algebra coincides with one of the familiar refinement limit algebras. The situation is more interesting however when $G_1$ is not a total ordering. The proof of the next result is similar to that of Proposition~ref{counterexample}.

\begin{proposition} \label{secondex}
If $G$ is not a total ordering then the tree-refinement algebra $\A\big(G, \{k_n\}\big)$ is a semi-Dirichlet algebra which is neither a Dirichlet algebra nor isometrically isomorphic to a tensor algebra.
\end{proposition}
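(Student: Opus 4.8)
The plan is to prove the three assertions separately, with the non-tensor-algebra part following Proposition~\ref{counterexample} via the cocycle obstruction of Theorem~\ref{main1}. I would fix the presentation $\A\big(G,\{k_n\}\big)=\varinjlim(\T_{G_n}^+,\sigma_n)$ of Example~\ref{tra}; each $G_n$ is an out-tree, so $\cenv(\A)=\C=\varinjlim M_{k_n}$. Since a trivial step $l_n=k_{n+1}/k_n=1$ leaves the algebra unchanged, I may assume $l_n\geq 2$ for infinitely many $n$ (otherwise $\A\cong\T_{G_N}^+$ is itself a tensor algebra). The single combinatorial fact I would record first is the behaviour of comparability under ampliation: collapsing each block $\{(i,s)\}_{s}$ of $G_{n+1}$ onto the vertex $i$ of $G_n$ carries directed paths to directed paths, so vertices in distinct blocks $i\neq j$ are comparable in $G_{n+1}$ exactly when $i,j$ are comparable in $G_n$, while any two vertices of the same block are comparable. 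Iterating, a descendant of $p$ and a descendant of $q$ are comparable in some $G_m$ only if $p,q$ are comparable in $G$.

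For the semi-Dirichlet property I would argue on matrix units: given matrix units $e,f\in\A$, pick $n$ with $e,f\in\A_n=\T_{G_n}^+$. As $G_n$ is an out-tree, $e^*f$ is either $0$ or a matrix unit $e_{v,z}^n$ whose source and range are both ancestors of the common final vertex of $e,f$, hence comparable, so $e^*f\in\A_n+\A_n^*\subseteq\A+\A^*$; by density of the matrix units this gives $\A^*\A\subseteq\overline{\A+\A^*}$. (Equivalently, each $\T_{G_n}^+$ is a tensor algebra, hence semi-Dirichlet, and the property passes to the limit.) For the \emph{failure} of the Dirichlet property I would use that $G$ is not a total ordering to choose incomparable vertices $p,q\in G^0$ and set $P=e_{pp}^1$, $Q=e_{qq}^1\in\C$. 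If $e_{a,b}^m$ is a matrix unit of $\A_m$ or its adjoint, then $Pe_{a,b}^mQ\neq 0$ forces $a$ to be a descendant of $p$ and $b$ a descendant of $q$; by the comparability fact $a,b$ are then incomparable, contradicting $e_{a,b}^m\in\A_m+\A_m^*$. Thus $P(\A_m+\A_m^*)Q=0$ for all $m$, so $P\,\overline{\A+\A^*}\,Q=0$ by continuity, whereas $Pe_{pq}^1Q=e_{pq}^1\neq 0$. Compressing, $\dist\big(e_{pq}^1,\overline{\A+\A^*}\big)\geq\|e_{pq}^1\|=1$, so $\A+\A^*$ is not dense in $\cenv(\A)$ and $\A$ is not Dirichlet.

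For the non-tensor-algebra claim I would mimic Proposition~\ref{counterexample}. If $\A$ were isometrically isomorphic to a tensor algebra, Theorem~\ref{main1} would furnish a $\bbZ^+_0$-valued continuous coherent cocycle $\delta\colon\R(\A)\to\bbZ^+_0$ with $\delta^{-1}(\{0\})=\widehat{\D}$, and, as in its proof, a non-diagonal matrix unit $e$ with $\hat e\subseteq\delta^{-1}(\{1\})$ (here $\A\neq\D$ since $G$ has edges). I would rule out such an $e$. Writing $e=e_{a,b}^N$ with $a\neq b$ and refining once through a level with $l_N\geq 2$, the embedding $\sigma_N$ gives $e=\sum_s e_{(a,s)(b,s)}^{N+1}$. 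No summand is an edge of $G_{N+1}$, since the only inter-block edge type is $e_{(i,1)(j,l_N)}^{N+1}$, which would require position $1$ of block $a$ and position $l_N$ of block $b$ simultaneously, i.e. $l_N=1$; hence each $e_{(a,s)(b,s)}^{N+1}$ is a directed path of length $\geq 2$ and factors as $gh$ with $g,h$ non-diagonal matrix units. For $(x,z)\in\widehat{e_{(a,s)(b,s)}^{N+1}}\subseteq\hat e$ we get $(x,z)=(x,y)(y,z)$ with $(x,y)\in\hat g$, $(y,z)\in\hat h$; since $\delta\geq 1$ off the diagonal, the cocycle identity yields $\delta(x,z)=\delta(x,y)+\delta(y,z)\geq 2$, contradicting $\hat e\subseteq\delta^{-1}(\{1\})$. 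This contradicts the existence of $e$, so $\A$ cannot be a tensor algebra.

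The main obstacle is the combinatorial bookkeeping for the ampliation: establishing that incomparability of vertices is inherited by all descendants (used in both the non-Dirichlet and semi-Dirichlet steps) and that refining any non-diagonal matrix unit through a nontrivial step yields only length-$\geq 2$ paths, so that every summand splits into two non-diagonal factors. Once these structural facts about $G_{n+1}$ are secured, the norm estimate for the Dirichlet failure and the cocycle contradiction are routine and parallel Proposition~\ref{counterexample}.
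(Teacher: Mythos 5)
Your proposal is correct and follows the same route the paper intends: the non-tensor-algebra claim is exactly the cocycle obstruction of Proposition~\ref{counterexample} (after one nontrivial refinement step every off-diagonal summand factors as a product of two non-diagonal matrix units, so no matrix unit $e$ can satisfy $\hat e\subseteq\delta^{-1}(\{1\})$, contradicting Theorem~\ref{main1}). The paper merely says the proof is ``similar to Proposition~\ref{counterexample}'' and omits the semi-Dirichlet and non-Dirichlet verifications entirely; your matrix-unit argument for $\A^*\A\subseteq\overline{\A+\A^*}$ (ancestors of a common vertex in an out-tree form a chain) and your compression argument with incomparable vertices $p,q$ for the failure of density of $\A+\A^*$ are correct fillings of that gap, including the necessary observation that incomparability is preserved under ampliation.
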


 An interesting feature of the limit algebras which also happen to be tensor algebras, comes from the proof of Lemma~\ref{Bn}.
 
\begin{definition} 
Let $G=\cup_{k=1}^l G_k$ be an out-forest which is the  disjoint union of out-trees $G_1, G_2, \dots, G_l$ and let $H$ be an out-tree. We say that a $*$-extendable embedding of the form
\[
\rho:\T^+_{G} =\oplus_{k=1}^l \T^+_{G_k}\longrightarrow \T^+_H
\]
is a \textit{tree-standard embedding} iff for any $f \in G^1$ we have $\rho(L_e) = \sum_{f } L_f$, for suitable $f \in H^1$. By a slight abuse of terminology, direct sums of tree-standard embeddings will also be called tree-standard.
\end{definition}

It is easy to see that a tree-standard embedding $\rho: \oplus_{k=1}^l \T^+_{G_k}\longrightarrow \T^+_H$, where $G_1, G_2, \dots, G_n, H$ are out-trees, maps the trees $G_k$ onto ``branches" of $H$ by ``joining" the root of each $G_k$ with one of the vertices of $H$ by a single vertex. Furthermore,  a direct sum of such embeddings produces the generic tree-standard embedding. Clearly all tree-standard embeddings are regular.

The next result gives a characterization, in terms of a presentation, for limit algebras which also happen to be isometrically isomorphic to tensor algebras.

\begin{theorem} \label{presentation1}
Let $\A = \varinjlim (\A_n, \rho_n)$ be a triangular limit algebra. Then $\A$ is isometrically isomorphic to the tensor algebra of a $\ca$-correspondence if and only if $\A$ admits a (perhaps different) presentation $\A = \varinjlim (\tilde{\A}_n, \tilde{\rho}_n)$, where each $\tilde{\A}_n$ is the tensor algebra of an out-forest and the embeddings $\tilde{\rho}_n$ are tree-standard embeddings.
\end{theorem}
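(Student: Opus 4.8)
The plan is to prove both implications by assembling results already in hand. For the forward direction I would start from the hypothesis that $\A$ is isometrically isomorphic to the tensor algebra of a $\ca$-correspondence. By Theorem~\ref{main} this forces $\R(\A)$ to be a tree carrying a $\bbZ^+_0$-valued continuous and coherent cocycle $\delta$, so the hypotheses of Lemma~\ref{Bn} are met. Invoking that lemma produces an ascending sequence $\{\B_n\}_{n=1}^{\infty}$ of finite dimensional subalgebras of $\A$ with $\cup_n \B_n = \cup_n \A_n$ and each $\B_n$ completely isometrically isomorphic to the tensor algebra $\T^+_{G_n}$ of an out-forest $G_n$. I would then set $\tilde{\A}_n = \B_n$ and let $\tilde{\rho}_n := \rho_n\mid_{\B_n}$, which maps $\B_n$ into $\B_{n+1}$ since the sequence is ascending. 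These maps inherit regularity and $*$-extendability from the regular AF structure of $\C$, and $\A = \overline{\cup_n \A_n} = \overline{\cup_n \B_n}$, so $\{\tilde{\A}_n, \tilde{\rho}_n\}$ is a genuine presentation of $\A$ in which each $\tilde{\A}_n$ is the tensor algebra of an out-forest.

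It then remains to check that the $\tilde{\rho}_n$ are tree-standard. This is exactly the content of the first paragraph of the proof of Lemma~\ref{Bn}: an edge generator $L_e$ of $G_n$ is a $1$-elementary matrix unit in $\A_n$, and it can be written as a sum $\sum_j e_j$ of matrix units $e_j \in \A_{n+1}$ with $\hat{e_j}\subseteq \hat{e}$, whence each $e_j$ is again $1$-elementary and thus an edge of $G_{n+1}$. Consequently $\tilde{\rho}_n(L_e) = \sum_j L_{e_j}$ is a sum of edge generators of $\T^+_{G_{n+1}}$, which is precisely the tree-standard condition. This finishes the forward direction.

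For the converse I would argue exactly as in the final paragraphs of the proof of Theorem~\ref{main}, now with the prescribed presentation $\A = \varinjlim(\tilde{\A}_n, \tilde{\rho}_n)$ playing the role of the algebras $\B_n$ constructed there. Write $\tilde{\A}_n = \T^+_{F_n}$ for the out-forest $F_n$ and let $X_1$ denote the collection of all edge generators occurring in the $F_n$. Because the embeddings are tree-standard, each edge generator of $F_n$ maps to a sum of edge generators of $F_{n+1}$, so $X_1$ forms a compatible inductive family and $X := \overline{[X_1]}$ is a well-defined $\D$-bimodule inside $\C$. Since no vertex of an out-forest receives more than one edge, distinct edge generators satisfy $e^*f = 0$ while $e^*e \in \D$, so $X^*X \subseteq \D$; and since each $\T^+_{F_n}$ is generated by $\D_n$ together with its edges, we get $\overline{\alg}(X \cup \D) = \A$.

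Finally I would verify criterion (ii) of Lemma~\ref{extthm}. As each $\tilde{\A}_n = \T^+_{F_n}$ is the tensor algebra of a graph, Lemma~\ref{extthm} supplies a completely isometric map $\phi_n$ on $\tilde{\A}_n$ carrying $d \mapsto d \otimes 1$ and every edge generator $e \mapsto e \otimes s$. Tree-standardness gives $\phi_{n+1}\mid_{\tilde{\A}_n} = \phi_n$, since an edge goes to a sum of edges and $e\otimes s \mapsto (\sum_f f)\otimes s$; hence the $\phi_n$ glue to a single completely isometric map $\phi$ on $\overline{\cup_n \tilde{\A}_n} = \A$ realizing the association of (\ref{extensioncriterion}). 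Lemma~\ref{extthm} then yields $\A \cong \T^+_{(X, \D)}$ completely isometrically, finishing the converse. I expect the only delicate points to be confirming that the restricted embeddings of Lemma~\ref{Bn} literally meet the tree-standard definition, and that $X$ is consistently defined across the limit in the converse; both hinge on the ``edge $\mapsto$ sum of edges'' behaviour that tree-standardness encodes, so no genuinely new idea beyond Theorem~\ref{main}, Lemma~\ref{Bn}, and Lemma~\ref{extthm} is needed.
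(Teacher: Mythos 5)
Your proposal is correct and follows essentially the same route as the paper: the forward direction is exactly the identification $\tilde{\A}_n = \B_n$ from Lemma~\ref{Bn} (with the tree-standard property read off from the ``edge $\mapsto$ sum of edges'' observation in its first paragraph), and the converse is the construction of $X = \overline{[X_1]}$ and the gluing of the maps $\phi_n$ via Lemma~\ref{extthm}, just as in the proof of Theorem~\ref{main}. Your write-up is in fact more explicit than the paper's one-sentence proof, but introduces no new ideas beyond it.
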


\begin{proof} The algebras $\tilde{\A}_n$ are exactly the algebras $\B_n$ appearing in the proof of 
 Lemma~\ref{Bn} and the tree-standard embeddings $\tilde{\rho}_n$ are precisely the ones arising from the inclusions $\B_n \subseteq \B_{n+1}$, $ n \in \bbN$.
\end{proof}
The previous result will allow us now to conclude that the (unique) cocycle appearing in Theorem~\ref{main} has a very specific description.

Let $\A = \varinjlim (\A_n, \rho_n)$, where each $\A_n$ is the tensor algebra of an out-forest $G_n$ and the embeddings $\rho_n$ are tree-standard embeddings. Given $(x, y ) \in \R(\A)$ we define
\begin{equation} \label{countingcoc}
\hdi(x,y)= k, \mbox{ where }(x, y) \in \widehat{L}_u , \mbox{ with } u \in G_n^k, \, \, n, k \in \bbN.
\end{equation}
Because the embeddings $\rho_n$ are tree-standard, the quantity $\hdi(x,y)$ is well defined, i.e., it is independent of the $n \in \bbN$ appearing in (\ref{countingcoc}). Furthermore, as a cocycle, $\hdi$ is coherent and continuous. We call $\hdi: \R(\A)\rightarrow \bbZ_0^+$ the \textit{counting cocycle}. The name comes from the strongly maximal TAF literature \cite{PPWcoc}. Indeed let $\A = \varinjlim (\A_n, \rho_n)$ be a strongly maximal  TAF $\ca$-algebra and let $\R(\A)$ be the associated groupoid. If $(x, y) \in \R(\A)$ then we define
\begin{equation} \label{countingcocycle}
\hat{\delta}(x,y) = \sup \{ j-i\mid (x, y) \in \widehat{e}_{i, j}, \mbox{ with }e_{i,j} \in \A_m, \mbox{ for some } m \in \bbN \}.
\end{equation}
This supremum is not always finite but whenever it is for all $(x, y) \in \R(\A)$, then it defines the \textit{counting cocycle} $\delta  : \R (\A) \rightarrow \bbZ$. It is a consequence of our theory that (\ref{countingcoc}) and (\ref{countingcocycle}) are compatible for the semigroupoid of a triangular limit algebra which is also isomorphic to a tensor algebra.

\begin{corollary}
Let $\A$ be a triangular limit algebra. If $\A$ is isometrically isomorphic to the tensor algebra of a $\ca$-correspondence, then the counting cocycle is the unique $\bbZ^+_0$-valued coherent cocycle on $\R(\A)$.
\end{corollary}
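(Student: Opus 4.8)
The plan is to derive the Corollary directly from the uniqueness already secured in Theorem~\ref{main1}, together with the fact that the counting cocycle $\hdi$ has been shown to be a $\bbZ^+_0$-valued coherent cocycle. Concretely, Theorem~\ref{presentation1} furnishes a presentation $\A = \varinjlim(\tilde{\A}_n, \tilde{\rho}_n)$ with each $\tilde{\A}_n$ the tensor algebra of an out-forest and each $\tilde{\rho}_n$ tree-standard; relative to this presentation the counting cocycle $\hdi$ of (\ref{countingcoc}) is well defined, and the discussion immediately preceding the Corollary records that $\hdi$ is coherent (and continuous). Thus $\hdi$ is at least \emph{one} $\bbZ^+_0$-valued coherent cocycle on $\R(\A)$, and the only thing left to verify is that it is the \emph{unique} such cocycle.

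Second, I would revisit the uniqueness argument in the final paragraph of the proof of Theorem~\ref{main1}. The key observation is that this argument never invokes continuity: given any two $\bbZ^+_0$-valued coherent cocycles $\delta$ and $\hdi$ and any $(x,y) \in \R(\A)$ with $\hdi(x,y) = n$, coherence of $\hdi$ factors $(x,y)$ as a product of $n$ elements of $\hdi^{-1}(\{1\})$, each necessarily off-diagonal; the cocycle identity for $\delta$ together with $\delta^{-1}(\{0\}) = \widehat{\D}$ (so that $\delta$ takes value $\geq 1$ on every off-diagonal pair) forces $\delta(x,y) \geq n$. Reversing the roles of $\delta$ and $\hdi$ gives the reverse inequality, whence $\delta = \hdi$. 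In other words, the proof of Theorem~\ref{main1} in fact establishes the stronger statement that $\R(\A)$ carries \emph{at most one} $\bbZ^+_0$-valued coherent cocycle, irrespective of continuity.

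Combining the two steps, since the counting cocycle $\hdi$ is a $\bbZ^+_0$-valued coherent cocycle and there is at most one such cocycle, $\hdi$ must be the unique $\bbZ^+_0$-valued coherent cocycle on $\R(\A)$, as claimed. I expect no genuine obstacle here: the entire content is already present in Theorems~\ref{main1} and~\ref{presentation1}, and the only point requiring a moment's care is the observation that the uniqueness in Theorem~\ref{main1} does not use continuity, so that it applies to the possibly larger class of coherent (not necessarily continuous) cocycles referenced in the statement of the Corollary.
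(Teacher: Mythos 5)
Your proposal is correct and follows essentially the same route the paper intends: the tree-standard presentation from Theorem~\ref{presentation1} makes the counting cocycle a well-defined $\bbZ^+_0$-valued coherent cocycle, and the uniqueness argument at the end of Theorem~\ref{main1} then identifies it with the unique such cocycle. Your added observation that this uniqueness argument never uses continuity is accurate and worth making explicit, since the corollary as stated asserts uniqueness among all coherent cocycles, not merely the continuous ones.
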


Theorem~\ref{presentation1} also gives an alternate proof of an old result of Poon and Wagner \cite{PW}.

\begin{proposition}\cite[Theorem 2.9]{PW} \label{presentation}
Let $\A = \varinjlim (\A_n, \rho_n)$ be a strongly maximal TAF algebra. Then $\A$ is $\bbZ$-analytic if and only if $\A$ admits a (perhaps different) presentation $\A = \varinjlim (\tilde{\A}_n, \tilde{\rho}_n)$, where each $\tilde{\A}_n$ is a direct sum of upper triangular matrix algebras and the regular $*$-extendable embeddings $\tilde{\rho}_n$ are direct sums of standard embeddings.
\end{proposition}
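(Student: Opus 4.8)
The plan is to obtain the proposition as a specialization of Theorem~\ref{presentation1}, the link being a dictionary between $\bbZ$-analyticity and the cocycle furnished by Theorem~\ref{main}. First I would note that for a strongly maximal TAF algebra $\A$ the relation $\R(\A)$ is automatically a tree: triangularity makes $\R(\A)$ a partial order, and strong maximality gives $\R(\A)\cup\R(\A)^{-1}=\R(\C)$, so on each equivalence class of the groupoid $\R(\C)$ the order $\R(\A)$ is total; hence whenever $(x,y),(x,z)\in\R(\A)$ the points $y,z$ lie in one class and are comparable. Next I would match the cocycles. The notion of $\bbZ$-analyticity corresponds to finiteness of the counting cocycle (\ref{countingcocycle}), and a finite counting cocycle is $\bbZ^+_0$-valued, continuous, and (as in the strongly maximal TAF literature) coherent; conversely any $\bbZ^+_0$-valued continuous coherent cocycle on $\R(\A)$ extends, by the Proposition following Definition~\ref{cocycle}, to a $\bbZ$-valued cocycle on $\R(\C)$ with non-negative part $\widehat{\A}$, witnessing $\bbZ$-analyticity. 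Thus $\A$ is $\bbZ$-analytic if and only if $\R(\A)$ carries a $\bbZ^+_0$-valued continuous coherent cocycle; since $\R(\A)$ is a tree, Theorem~\ref{main} shows this happens exactly when $\A$ is isometrically isomorphic to a tensor algebra, and Theorem~\ref{presentation1} then supplies a presentation $\A=\varinjlim(\tilde{\A}_n,\tilde{\rho}_n)$ in which each $\tilde{\A}_n$ is the tensor algebra of an out-forest and each $\tilde{\rho}_n$ is tree-standard.

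It remains to sharpen this presentation to the one required, and here strong maximality is used a second time. The out-forest $G_n$ attached to $\tilde{\A}_n=\B_n$ already has in-degree at most one at every vertex by the proof of Lemma~\ref{Bn}; I would establish the dual bound on out-degrees. If some vertex emitted two distinct $1$-elementary edges, recorded by $(x_1,y),(x_2,y)\in\delta^{-1}(\{1\})$ with $x_1\neq x_2$, then strong maximality makes $x_1,x_2$ comparable, say $(x_1,x_2)\in\R(\A)$, and the cocycle identity $1=\delta(x_1,y)=\delta(x_1,x_2)+\delta(x_2,y)=\delta(x_1,x_2)+1$ forces $\delta(x_1,x_2)=0$, i.e.\ $x_1=x_2$, a contradiction. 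Consequently every component of $G_n$ is a directed path and $\tilde{\A}_n=\T^+_{G_n}$ is a direct sum of upper triangular matrix algebras. Finally I would read off the embeddings: a tree-standard embedding between such algebras sends each superdiagonal generator $L_e$ to a sum $\sum_f L_f$ of superdiagonal generators and places each source chain as a family of disjoint sub-chains inside the target chains, which is exactly the block pattern $\sigma(e_{ij})=\sum_{k=0}^{m-1} e_{i+kn, j+kn}$ of a standard embedding; conversely a direct sum of standard embeddings between direct sums of upper triangular matrix algebras is visibly tree-standard, and such algebras are tensor algebras of out-forests, so Theorem~\ref{presentation1} returns a tensor-algebra structure and hence $\bbZ$-analyticity. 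This closes both directions.

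I expect the main obstacle to be the identification of tree-standard embeddings between chains with genuine standard embeddings. The reduction of the out-forests to chains is clean, but one must verify that the images of each source chain sit as equally sized consecutive blocks inside the target chains --- up to the relabeling of matrix units permitted by passing to ``a perhaps different presentation'' --- rather than as the unevenly interleaved blocks of a refinement embedding, which is exactly the embedding that destroys the tensor-algebra property. A secondary point to pin down is the coherence clause of the cocycle dictionary, namely that the (finite) counting cocycle of a strongly maximal TAF algebra is coherent, so that Theorem~\ref{main} genuinely applies.
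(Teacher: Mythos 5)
Your argument follows essentially the same route as the paper, whose proof is a three-sentence compression of exactly this: $\R(\A)$ is a tree for strongly maximal TAF algebras, the cocycle criterion of Theorem~\ref{main} translates $\bbZ$-analyticity into being isometrically isomorphic to a tensor algebra, and the algebras $\B_n$ of Lemma~\ref{Bn} specialize to direct sums of upper triangular matrices with the tree-standard embeddings becoming standard. The two points you flag as obstacles --- the out-degree bound that forces each out-forest to be a disjoint union of chains, and the verification that a tree-standard embedding between chains places the source chain as consecutive length-$n$ blocks (so it is a standard embedding up to relabeling, unlike the refinement embedding whose summands are not edge generators) --- are details the paper leaves implicit, and your treatment of them is correct.
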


\begin{proof} The spectrum $\R(\A)$ of a strongly maximal TAF algebra is always a tree. Hence the presence of an integer valued cocycle is equivalent to being isomorphic to a tensor algebra. Now the upper triangular matrices are isomorphic to the tensor algebras of linear orderings and so the algebras $\B_n$ appearing in the proof of Lemma~\ref{Bn} are just direct sums of such algebras.
\end{proof}

The following example illustrates the various possibilities that arise in the presentation of a strongly maximal TAF which is also happens to be a tensor algebra. In particular the change of presentation hinted in Theorem~\ref{presentation1} and Proposition~\ref{presentation} may be unavoidable.

\begin{example}
Let $\A = \varinjlim (M_{3^n}, \rho_n)$ be the TUHF algebra with $\rho_n$ defined as
{\small \[
\left[\begin{array}{ccc} A_{11} & A_{12} & A_{13} \\ & A_{22} & A_{23} \\ &&A_{33} \end{array}\right]
\mapsto \left[\begin{array}{ccccccccc} A_{11} & A_{12} &&&A_{13} \\ & A_{22} &&& A_{23} \\ &&A_{11} & A_{12} && A_{13} \\ &&& A_{22} && A_{23} \\ &&&&A_{33} \\ &&&&&A_{33} \\ &&&&&& A_{11} & A_{12} & A_{13} \\
&&&&&&& A_{22} & A_{23} \\ &&&&&&&&A_{33} \end{array}\right]
\]}
where $A_{ij} \in M_{3^{n-1}}$. Another view of this is the block map
\[
\left[\begin{array}{cc} A & B \\  & C \end{array}\right] \mapsto
\left[\begin{array}{cccccc} A && B \\ & A && B \\ && C \\ &&&C \\ &&&&A & B \\ &&&&&C \end{array}\right]
\]
where $A\in M_{2\cdot 3^{n-1}}, C \in M_{3^{n-1}}, B\in M_{2\cdot 3^{n-1}, 3^{n-1}}$.

 Let $e \in M_{3^n}$ be any matrix unit with $e = e_{i_1, j_1}^{n+1} + e_{i_2, j_2}^{n+1} + e_{i_3, j_3}^{n+1}$, $e_{i_s, j_s}^{n+1} \in  M_{3^{n+1}}$. If $(x, y) \in \hat{e}$ then there exists $s=1,2,3$ so that $(x, y) \in \hat{e}_{i_s, j_s}^{n+1}$. In that case it is easy to see that
\begin{equation*}
\sup \{ j-i\mid (x, y) \in \widehat{e}_{i, j}, \mbox{ with }e_{i,j} \in M_{3^m}, \mbox{ for some } m \in \bbN \}= j_s - i_s.
\end{equation*}
Hence the supremum in (\ref{countingcocycle}) exists and therefore the counting cocycle $\hat{\delta}$ is well defined on the semigroupoid of $\A$. Furthermore $\hat{\delta}$ maintains the same value in a neighborhood of $(x, y)$, namely $\hat{e}_{i_s, j_s}^{n+1}$. Hence $\hat{\delta}$ is continuous and therefore Theorem~\ref{main} shows that $\A$ is the tensor algebra of a $\ca$-correspondence. Note however that the presentation  $\A = \varinjlim (T_{3^n}, \rho_n)$ by no means involves direct sums of standard embeddings. Nevertheless Theorem~\ref{presentation1} guarantees that such a presentation does exist.
\end{example}

\section{classification}

In this section we will show that if $\A$ is a triangular limit algebra which also happens to be a tensor algebra, then $\A$ is actually the tensor algebra of a topological graph which is naturally associated with $\R(\A)$. We will subsequently show that these graphs form a complete isomorphism invariant for these algebras.

 A topological graph $G= (G^{0}, G^{1}, r , s)$ consists of two locally compact spaces $G^0$, $G^1$, a continuous map $r: G^1 \rightarrow G^0$ and a local homeomorphism $s: G^1 \rightarrow G^0$. The set $G^0$ is called the base (vertex) space and $G^1$ the edge space. When $G^0$ and $G^1$ are both equipped with the discrete topology, we have a discrete countable graph (see below).

With a topological graph $G=  (G^{0}, G^{1}, r , s)$ there is a $\ca$-correspondence $X_{G}$ over $C_0 (G^0)$. The right and left actions of $C_0(G^0 )$ on $C_c ( G^1)$ are given by
\[
(fFg)(e)= f(r(e))F(e)g(s(e))
\]
for $F\in C_c (G^1)$, $f, g \in C_0 (G^0)$ and $e \in G^1$. The inner product is defined for $F_1, F_2 \in C_c ( G^1)$ by
\[
\left< F_1 \, | \, F_2\right>(v)= \sum_{e \in s^{-1} (v)} \overline{F_1(e)}F_2(e)
\]
for $v \in G^0$. Finally, $X_{G}$ denotes the completion of $C_c ( G^1)$ with respect to the norm
\begin{equation} \label{norm}
\|F\| = \sup_{v \in G^0} \left< F \, | \, F\right>(v) ^{1/2}.
\end{equation}

When $G^0$ and $G^1$ are both equipped with the discrete topology, then the tensor algebra $\T_{G}^+ \equiv \T^+_{X_{G}}$ associated with $G$ coincides with the quiver algebra of Muhly and Solel \cite{MS}. 

Now let $\A$ be a triangular limit algebra with presentation\break $\big\{ \A_n, \C_n, \D_n, \rho_n\big\}_{n=1}^{\infty}$ and assume that $\A$ admits a continuous and coherent cocycle $\delta : \R(\A) \rightarrow \bbZ^+_0$. Let $G^0 =\D^{\star}$ be the Gelfand space of $\A \cap\A^*$ and let 
\begin{equation} \label{G1}
G^1=\{ (x, y) \in \R(\A)\mid  \delta(x, y) =1\}. 
\end{equation} 
Given $(x, y) \in G^1 \subseteq \R(\A)$, let $r(x, y)=x$ and $s(x,y)=y$. The quadruple $G=  (G^{0}, G^{1}, r , s)$ $\A$ is said to be the \textit{graph associated with  $( \R(\A), \delta)$.}
 
Assume further that $\A$ is isometrically isomorphic to the tensor algebra of a $\ca$-correspondence.
By Theorem~\ref{presentation} we may alter the presentation for $\A$ so that each $\A_n\simeq \T_{G_n}$ is the tensor algebra of an out-forest $G_n$ and the embeddings $\rho_n$ are tree-standard embeddings. Let $X_1$ be the collection of all matrix units in $\A$ corresponding to the edges of all $G_n$, $n=1, 2, \dots$, and let $X = \overline{[ X_1]}$. As we saw in Theorem~\ref{main}, the pair $(X, \D)$ is a $\ca$-correspondence and $\A\simeq \T_X^+$. Furthermore
\[
G^1=\bigcup_{e \in X_1} \hat{e}.
\]
The collection $\{\hat{e} \mid e \in X_1\}$ forms a base for a topology on $G^1$ and $G^1$ equipped with that topology becomes a locally compact Hausdorff space. 

\begin{lemma} \label{approx}
 If $G=  (G^{0}, G^{1}, r , s)$ is as above, then $G$ is a topological graph. Furthermore, any element of $X_G$ can be approximated by finite sums of the form $\sum_{i=1}^n \lambda_i \chi_{\hat{e}_i}$, where $\{\lambda_i\}_{i=1}^n$ are scalars, $\{ e_i\}_{i=1}^n$ are matrix units with disjoint graphs and $\chi_{\hat{e}}$ denotes the characteristic function of $\hat{e}$. 
\end{lemma}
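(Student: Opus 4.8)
The plan is to verify the two assertions separately, treating the claim that $G$ is a topological graph as the routine part and the $X_G$-approximation as the substantive one.

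For the first assertion, recall that each $\hat{e}$, $e \in X_1$, is by construction the graph of the partial homeomorphism $\widetilde{ee^*} \to \widetilde{e^*e}$ induced by $e$; hence the two coordinate projections restrict to homeomorphisms of $\hat{e}$ onto $\widetilde{ee^*}$ and onto $\widetilde{e^*e}$ respectively. Now $G^0 = \D^{\star}$ is the Gelfand space of an abelian $C^*$-algebra, hence locally compact Hausdorff, and $G^1$ is locally compact Hausdorff with the compact-clopen basis $\{\hat{e} \mid e \in X_1\}$. The range map $r$ is merely the restriction to $G^1$ of the first coordinate projection $\D^{\star} \times \D^{\star} \to \D^{\star}$, so it is continuous with no further work. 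For $s$, given $(x,y) \in G^1$ choose $e \in X_1$ with $(x,y) \in \hat{e}$; then $s|_{\hat{e}}$ is the second coordinate projection, a homeomorphism of the open set $\hat{e}$ onto the open set $\widetilde{e^*e} \subseteq G^0$. As the $\hat{e}$ cover $G^1$, this exhibits $s$ as a local homeomorphism, so $G$ is a topological graph.

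For the approximation, since $X_G$ is by definition the completion of $C_c(G^1)$ in the norm (\ref{norm}), it suffices to approximate an arbitrary $F \in C_c(G^1)$. The main point, and the only real obstacle, is that the $X_G$-norm is not the supremum norm: it sums $|F|^2$ over the source fibre $s^{-1}(v)$, so a uniform pointwise estimate controls $\|\cdot\|$ only after one bounds the cardinality of those fibres over the support. I would fix this bound once and for all, \emph{before} choosing any accuracy. Let $K = \supp F$, cover the compact set $K$ by finitely many basis sets, and put $V = \bigcup_{i=1}^{k_0} \hat{e}_i \supseteq K$; then $V$ is compact clopen, and because $s$ restricts to an injection on each $\hat{e}_i$ the source fibres satisfy $N_0 := \sup_{v} |s^{-1}(v) \cap V| \leq k_0 < \infty$. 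Crucially, $V$ and $N_0$ are chosen independently of the accuracy $\epsilon$.

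Next, given $\epsilon > 0$, I would partition the \emph{fixed} set $V$ into disjoint graphs of matrix units on which $F$ is nearly constant. Pushing the finitely many sets $\hat{e}_i$ to a common level $\A_m$ and refining, every $\hat{e}_i$ decomposes as a disjoint union of graphs of matrix units of $\A_m$; since distinct matrix units of a single $\A_m$ have disjoint graphs, $V$ becomes a disjoint union $V = \bigsqcup_j \hat{g}_j$ of such graphs, and these partitions become arbitrarily fine as $m \to \infty$ because the $\hat{e}$ form a basis. Taking $m$ large enough that $F$ varies by less than $\epsilon$ on each $\hat{g}_j$ (uniform continuity of $F$ on the compact $V$), I would set $\lambda_j = F(p_j)$ for a chosen $p_j \in \hat{g}_j$ and $H = \sum_j \lambda_j \chi_{\hat{g}_j}$. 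Then $|F - H| < \epsilon$ everywhere, it being $0$ off $V \supseteq K$, and since $\bigsqcup_j \hat{g}_j = V$ the source-fibre bound is still $N_0$, whence
\[
\|F - H\|^2 = \sup_v \sum_{e \in s^{-1}(v)} |F(e) - H(e)|^2 \leq N_0\,\epsilon^2 .
\]
Letting $\epsilon \to 0$ yields the desired approximation, with the $\hat{g}_j$ having pairwise disjoint graphs as required. I expect the only delicate points to be the uniform fibre bound, handled by keeping $V$ fixed while refining, and the claim that the level-$m$ partitions of $V$ become arbitrarily fine; both rest on $s$ being a local homeomorphism and on $\{\hat{e}\}$ being a clopen basis.
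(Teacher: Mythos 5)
Your proof is correct and follows essentially the same route as the paper's: show that $s$ restricts to a homeomorphism of each basic clopen set $\hat{e}$ onto a clopen subset of $\D^{\star}$, then approximate $F \in C_c(G^1)$ uniformly by a step function supported on a disjoint refinement of finitely many matrix-unit graphs (the paper gets the disjointification from the fact that two such graphs are either nested or disjoint, you from pushing to a common level $\A_m$ — the same mechanism). Your one addition — fixing the compact clopen set $V \supseteq \supp F$ and the fibre bound $N_0$ \emph{before} choosing $\epsilon$, so that the uniform estimate converts into the $X_G$-norm bound $N_0\,\epsilon^2$ — makes explicit a genuine point that the paper's ``the conclusion follows'' leaves to the reader, since the norm in (\ref{norm}) sums over source fibres rather than taking a supremum.
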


\begin{proof} If $e \in \A$ is any matrix unit then $s_{\mid \hat{e}}$ becomes a homeomorphism onto a clopen subset of $\D^{\star}$. Thus $s: G^1\rightarrow G^0$ is a local homeomorphism and so $G$ is a topological graph. 

Now notice that if $e , f \in \A$ are matrix units then either $\hat{e} \cap \hat{f}=\emptyset$ or otherwise $\hat{e} \subseteq \hat{f}$ or  $\hat{f} \subseteq \hat{e}$. Furthermore in the case of a containment, say $\hat{e} \subseteq \hat{f}$, it is easy to see that there exists matrix units $e_1, e_2, \dots e_m$ with disjoint graphs so that $\hat{f} \backslash \hat{e} = \cup_{i=1}^m\hat{e}_i$. This implies the following: if $f_1, f_2, \dots ,f_k$ are matrix units in $\A$, then there exist matrix units $e_1, e_2, \dots e_n$ with disjoint graphs so that $ \cup_{j=1}^k\hat{f}_j =  \cup_{i=1}^n\hat{e}_i$ and each $\hat{e_i}$ is contained in some $\hat{f_j}$. The second statement of the lemma follows easily from that fact.

Indeed it is enough to verify the approximation claim for any $F \in C_c(G^1)$. Let $\epsilon \geq 0$. By compactness there are finitely many matrix units $f_1, f_2, \dots ,f_k$ and scalars $\mu_1, \mu_2, \dots \mu_k$ so that for each $x \in \supp f$ there exists $j = 1, 2, \dots k$ so that $x \in \hat{f}_j $ and $| F(x) -\mu_j |\leq \epsilon$. Refine the partition $\{ \hat{f}_1, \hat{f}_2, \dots , \hat{f}_k\} $ into a partition $\{ \hat{e}_1, \hat{e}_2, \dots , \hat{e}_n\} $ using the previous paragraph and the conclusion follows. 
\end{proof}

\begin{theorem} \label{maindescr}
Let $\A$ be a triangular limit algebra which is isometrically isomorphic to the tensor algebra of a $\ca$-correspondence and let $\delta: \R(\A) \rightarrow \bbZ^+_0$ be the unique continuous coherent cocycle on the semigroupoid of $\A$. If $G=  (G^{0}, G^{1}, r , s)$ is the topological graph associated with  $( \R(\A), \delta)$, then $\A$ is isometrically isomorphic to  $\T^+_G$
\end{theorem}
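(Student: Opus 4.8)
The plan is to leverage the identification $\A \simeq \T^+_X$ already obtained in Theorem~\ref{main}, where $X = \overline{[X_1]}$ is the closed $\D$-bimodule generated by the $1$-elementary matrix units, and to show that the $\ca$-correspondence $(X, \D)$ is unitarily equivalent to the graph correspondence $(X_G, C_0(G^0))$. Since $G^0 = \D^{\star}$ we have the canonical identification $\D = C_0(\D^{\star}) = C_0(G^0)$, and once the unitary equivalence of correspondences is in place the conclusion $\A \simeq \T^+_X \simeq \T^+_{X_G} = \T^+_G$ follows from the functoriality of the tensor algebra construction: unitarily equivalent $\ca$-correspondences have (completely) isometrically isomorphic tensor algebras.

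First I would define the candidate bimodule map $U \colon X \to X_G$ on the dense set $X_1$ by sending each $1$-elementary matrix unit $e$ to the characteristic function $\chi_{\hat{e}}$ of its graph, and then extend linearly. This is well defined: if $e \in X_1 \cap \A_n$ is refined at level $n+1$ as $e = \sum_i e_i$ with $e_i \in X_1 \cap \A_{n+1}$, then the graphs $\hat{e}_i$ are pairwise disjoint with $\hat{e} = \bigsqcup_i \hat{e}_i$, so $\chi_{\hat{e}} = \sum_i \chi_{\hat{e}_i}$. Comparing module structures, the right action $ed$ multiplies by the value of $d$ at the source, which matches $(\chi_{\hat{e}}\,g)(x,y) = \chi_{\hat{e}}(x,y)\,g(s(x,y))$ in $X_G$; similarly the left action matches $(f\,\chi_{\hat{e}})(x,y) = f(r(x,y))\,\chi_{\hat{e}}(x,y)$. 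Hence $U$ intertwines both the left and right $\D$-actions.

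The crucial step is to verify that $U$ preserves the inner products, and this is where the tree hypothesis enters decisively. Given $e, f \in X_1$, I would bring them to a common level $\A_n$ and write $e = \sum_i e_i$, $f = \sum_j f_j$ with $e_i, f_j \in X_1 \cap \A_n = \B_{n,1}$. As established in the proof of Lemma~\ref{Bn}, no vertex of the graph $G_n$ receives more than one edge, so a term $e_i^* f_j$ is nonzero precisely when $e_i$ and $f_j$ share the same range, which forces $e_i = f_j$; in that case $e_i^* f_j = e_i^* e_i$ is the diagonal projection onto the source of $e_i$. Therefore $\sca{e, f}_X = e^* f = \sum\{\, e_i^* e_i \mid \hat{e}_i \subseteq \hat{e} \cap \hat{f}\,\}$, which as a function on $\D^{\star} = G^0$ is the characteristic function of $\{\, v \mid (x, v) \in \hat{e} \cap \hat{f} \text{ for some } x \,\}$ (the sources being disjoint because each $\hat{e}_i$ is the graph of a partial homeomorphism). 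On the other side, since each of $\hat{e}, \hat{f}$ meets $s^{-1}(v)$ in at most one edge, the graph inner product $\sca{\chi_{\hat{e}} \mid \chi_{\hat{f}}}_{X_G}(v) = \sum_{(x,v) \in s^{-1}(v)} \overline{\chi_{\hat{e}}(x,v)}\,\chi_{\hat{f}}(x,v)$ equals $1$ exactly when some $(x,v)$ lies in $\hat{e} \cap \hat{f}$ and $0$ otherwise. The two expressions coincide, so $U$ is inner-product preserving, and in particular isometric and injective on $X$.

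Finally, Lemma~\ref{approx} shows that the finite sums $\sum_i \lambda_i \chi_{\hat{e}_i}$ are dense in $X_G$, so $U$ has dense range; being an inner-product preserving bimodule map with dense range, it extends to a unitary equivalence of the correspondences $(X, \D)$ and $(X_G, C_0(G^0))$. Applying functoriality of the tensor algebra then yields the isometric isomorphism $\A \simeq \T^+_G$. I expect the inner-product computation above to be the main obstacle, since it is exactly the point at which the tree structure of $\R(\A)$ must be converted into the combinatorics of the source map of $G$; the bookkeeping of refining $e$ and $f$ to a common level while tracking which pieces survive under $e_i^* f_j$ relies in an essential way on the \emph{no vertex receives more than one edge} property.
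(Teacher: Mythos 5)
Your proof is correct, but it takes a genuinely different route from the paper's. The paper does not exhibit a unitary equivalence of correspondences: it builds a concrete covariant representation $(\pi,t)$ of $\big(X_G, C(G^0)\big)$ into $\A\otimes\ca(s)$, with $\pi(f)=\F^{-1}(f)\otimes I$ and $t\big(\sum_i\lambda_i\chi_{\hat e_i}\big)=\big(\sum_i\lambda_i e_i\big)\otimes s$, then invokes Katsura's gauge-invariance uniqueness theorem to see that $\pi\times t$ is completely isometric on $\T^+_{X_G}$, and finally identifies the image with $\A$ via the extension-criterion argument from Theorem~\ref{main}. You instead prove that the correspondence $(X,\D)$ of Theorem~\ref{main} is unitarily equivalent to $(X_G, C_0(G^0))$ via $e\mapsto\chi_{\hat e}$ and appeal to functoriality of the tensor algebra under unitary equivalence; the combinatorial core --- disjointness of refined graphs, the ``no vertex receives two edges'' property forcing $e_i^*f_j\neq 0\Rightarrow e_i=f_j$, and the density statement of Lemma~\ref{approx} --- is exactly what the paper's ``it is easy to see that $t(\xi)$ is well defined and $\|t(\xi)\|=\|\xi\|$'' silently relies on, and making it explicit is a genuine plus. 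What your route buys is the sharper conclusion that the defining correspondence of $\A$ \emph{is} the graph correspondence (not merely that the tensor algebras agree), at the cost of leaning on the black-boxed functoriality fact and on Theorem~\ref{main}; the paper's route works at the level of representations but needs gauge-invariance uniqueness. One small point to spell out: well-definedness of the linear extension of $U$ on an arbitrary relation $\sum_i\lambda_ie_i=0$ (not just a single refinement) needs the observation that distinct matrix units at a common level have disjoint graphs, so the corresponding characteristic functions are linearly independent.
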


\begin{proof} Consider the following covariant representation $(\pi, t)$ of the $\ca$-correspondence $\big(X_G, C(G^0)\big)$. Let $\F : \D \rightarrow C(G^0)$ denote the Gelfand transform and let $s$ be the forward shift acting on $l^2(\bbN)$. Then let 
\[
\pi: C(G^0)\longrightarrow \D\otimes I : f \longmapsto \F^{-1}(f) \otimes I.
\]
The mapping $t : X_G\rightarrow X\otimes\ca(s)$ is defined as follows: if $\xi = \sum_{i=1}^n \lambda_i \chi_{\hat{e}_i}$ is as in Lemma~\ref{approx}, then 
\[
t(\xi)=\big(\sum_{i=1}^n \lambda_i e_i\big)\otimes s.
\]
It is easy to see that $t(\xi)$ is well-defined and $\|t(\xi)\|= \|\xi\|$. Lemma~\ref{approx} shows now that $t$ extends to an isometry $t : X_G\rightarrow X_1\otimes\ca(s)$ and that the pair $(\pi, t)$ is a covariant representation of  $\big(X_G, C(G^0)\big)$.

One observes now that the representation $(\pi , t)$ satisfies the requirements of Katsura's gauge invariance uniqueness theorem and therefore $(\pi\times t)(\T^+_{X_G}) \break \simeq \T_{X_G}$. However, our considerations in the proof of Theorem~\ref{main} show that $(\pi\times t)(\T^+_{X_G} )\simeq \A$ and the conclusion follows.
\end{proof}

Davidson and Roydor \cite{DR} have characterized when two tensor algebras of topological graphs over a zero-dimensional space are isomorphic. However the topological graphs they consider are compact, i.e., both the edge and vertex spaces are compact. Hence their theory does not apply here. Nevertheless, since our algebras are also triangular limit algebras we can capitalize on the existence of a complete invariant for triangular limit algebras.

Two topological graphs $G= (G^{0}, G^{1}, r , s)$ and $\fG= (\fG^{0}, \fG^{1}, r , s)$ are said to be \textit{conjugate} if there exist homeomorphisms $\phi_0: G^0\rightarrow \fG^0$ and $\phi_1: G^1\rightarrow \fG^1$ so that the following diagrams commute
\begin{equation} \label{comdiag}
\xymatrix{{G^1} \ar[r]^r \ar[d]_{\phi_1} & {G^0} \ar[d]^{\phi_0}  & &{G^1} \ar[r]^s \ar[d]_{\phi_1} & {G^0} \ar[d]^{\phi_0} \\
{\fG^1} \ar[r]^r  &{\fG^0} & &{\fG^1} \ar[r]^s  &{\fG^0}} 
\end{equation}
It is easy to see that if two topological graphs are conjugate then the associated tensor algebras are isometrically isomorphic. It follows from work of Davidson and Roydor \cite{DR} that the converse is also true provided that the topological graphs are compact and acting on totally disconnected spaces. Even though our topological graphs are not compact, this situation persists here.

\begin{corollary} \label{clas1}
Let $\A$ and $\fA$ be limit algebras which are also tensor algebras and let $G$ and $\fG$ be the topological graphs associated with $\R(\A)$ and $\R(\fA)$ respectively. If  $\A$ and $\fA$ are isometrically isomorphic as algebras then $G$ and $\fG$ are conjugate as topological graphs.
\end{corollary}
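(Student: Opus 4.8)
The plan is to derive the conjugacy from Power's classification of triangular limit algebras \cite[Theorem 7.5]{Pow} together with the uniqueness of the cocycle furnished by Theorem~\ref{main}. Write $\delta:\R(\A)\rightarrow\bbZ^+_0$ and $\delta':\R(\fA)\rightarrow\bbZ^+_0$ for the unique continuous coherent cocycles, so that by the definition (\ref{G1}) of the associated graphs we have $G^0=\D^{\star}$, $G^1=\delta^{-1}(\{1\})$, $\fG^0=\fD^{\star}$ and $\fG^1=(\delta')^{-1}(\{1\})$. Since $\A$ and $\fA$ are isometrically isomorphic triangular regular canonical subalgebras of AF $\ca$-algebras, \cite[Theorem 7.5]{Pow} supplies a homeomorphism $\phi_0:\D^{\star}\rightarrow\fD^{\star}$ of the Gelfand spaces of the two diagonals whose product $\Phi:=\phi_0\times\phi_0$ restricts to a homeomorphism $\Phi:\R(\A)\rightarrow\R(\fA)$ and is an isomorphism of topological binary relations; in particular $\Phi$ carries units to units and respects the partial composition $(x,y)(y,z)=(x,z)$.

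Next I would show that $\Phi$ intertwines the two cocycles. Consider the pullback $\delta'\circ\Phi$ on $\R(\A)$. It is continuous because $\Phi$ is a homeomorphism and $\delta'$ is continuous; it satisfies the additive cocycle identity because $\Phi$ preserves the partial composition and $\Phi(x,y)=(\phi_0(x),\phi_0(y))$; it satisfies condition (ii) of Definition~\ref{cocycle} because $\Phi$ maps the diagonal $\widehat{\D}$ onto $\widehat{\fD}$, so that $(\delta'\circ\Phi)^{-1}(\{0\})=\Phi^{-1}\big((\delta')^{-1}(\{0\})\big)=\Phi^{-1}(\widehat{\fD})=\widehat{\D}$; and it is coherent because $\Phi^{-1}$ is again a relation isomorphism, so any factorization of $\Phi(x,y)$ into $k$ elements of $(\delta')^{-1}(\{1\})$ pulls back to a factorization of $(x,y)$ into $k$ elements of $(\delta'\circ\Phi)^{-1}(\{1\})$. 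Thus $\delta'\circ\Phi$ is a $\bbZ^+_0$-valued continuous coherent cocycle on $\R(\A)$, and the uniqueness clause of Theorem~\ref{main} forces $\delta'\circ\Phi=\delta$. Consequently $\Phi$ carries $\delta^{-1}(\{1\})$ onto $(\delta')^{-1}(\{1\})$, so it restricts to a homeomorphism $\phi_1:=\Phi\mid_{G^1}:G^1\rightarrow\fG^1$.

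Finally I would verify the conjugacy diagrams (\ref{comdiag}). Since $r(x,y)=x$, $s(x,y)=y$ and $\Phi(x,y)=(\phi_0(x),\phi_0(y))$ for $(x,y)\in G^1$, we obtain $r\circ\phi_1=\phi_0\circ r$ and $s\circ\phi_1=\phi_0\circ s$ at once, so $G$ and $\fG$ are conjugate. The crux of the argument is the second paragraph: Power's theorem by itself only guarantees a spectral isomorphism and says nothing about the $\bbZ^+_0$-grading, so one must see that this isomorphism automatically transports $\delta$ to $\delta'$. That is precisely where uniqueness enters, and the only point requiring verification is the stability of each cocycle axiom under pullback along a relation isomorphism; once this is granted, $\delta'\circ\Phi=\delta$ and the conjugacy is forced.
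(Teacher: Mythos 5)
Your proposal is correct and follows essentially the same route as the paper: apply Power's spectral classification \cite[Theorem 7.5]{Pow} to get a semigroupoid isomorphism $\theta^{(2)}$, observe that the pullback $\delta_{\fA}\circ\theta^{(2)}$ is again a continuous coherent $\bbZ^+_0$-valued cocycle, and invoke the uniqueness clause (Theorem~\ref{main1}) to force it to equal $\delta_{\A}$, whence $\theta^{(2)}(G^1)=\fG^1$ and the conjugacy diagrams commute. Your explicit verification of each cocycle axiom under pullback is a slightly more detailed version of what the paper compresses into ``the preservation of products guarantees,'' but the argument is the same.
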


\begin{proof}
By \cite[Theorem 7.5]{Pow}, there exists a homeomorphism
\[
\theta: \D^{\star} \equiv G^0\longrightarrow \fG^0\equiv \fD^{\star}
\]
between the Gelfand spaces of the diagonals so that the map
\[
\theta^{(2)}: \D^{\star} \times \D^{\star} \ \longrightarrow  \fD^{\star} \times \fD^{\star} : (x, y) \longmapsto \big(\theta(x), \theta(y)\big)
\]
becomes a semigroupoid isomorphism between $\R(\A)$ and $\R(\fA)$,\break i.e., $\theta^{(2)}\big( \R(\A)\big) = \R(\fA)$ and $\theta^{(2)}$ preserves products. The preservation of products guarantees that the map 
\[
\delta(x, y)\equiv \delta_{\fA}\big( \theta(x), \theta(y)\big), \quad (x, y) \in\R(\A),
\]
is a continuous and coherent $\bbZ^+_0$-valued cocycle on $\R(\A)$. Hence, by Theorem~\ref{main1}, we have $\delta_{\A}= \delta$ and so $\theta^{(2)}(G^1) = \fG^{1}$. Therefore by taking $\phi_0=\theta$ and $\phi_1=\theta^{(2)}$ in (\ref{comdiag}) we obtain the desired conjugacy between $G$ and $\fG$.
\end{proof} 

We conclude the paper with the classification of the tree-refinement algebras of Example~\ref{tra}. Even though our techniques work on any UHF $\ca$-algebra, we choose to work with the $2^{\infty}$-UHF $\ca$-algebra for ease in notation. It turns out that the arguments involved in the proof of this classification are inductive in nature and require a variant of the well-known \textit{Aho-Hopcroft-Ullman algorithm} on the rooted-tree isomorphism problem \cite[pages 84--85]{AHU}, which we now describe.

An out-tree $G$ where each vertex, apart from sinks, emits at least two edges is said to be in \textit{reduced form}. Given an out-tree $G$, we associate a graph $\Gr$ in reduced form as follows. If $p, q\in G^0$ emit at least two edges and there is a path $e_1e_2\dots e_n=pe_1e_2\dots e_nq$ so that each one of the vertices $s(e_1), s(e_2), \dots s(e_{n-1})$ emits only one edge, then we delete these vertices from the graph, we write only one edge $e$ from $q$ to $p$ and we assign the weight $n-1$ to $p$. This way each edge of $\Gr$ apart from its sinks emits at least two edges. It is immediate that two out-trees $G$ and $\fG$ are isomorphic as graphs if and only if the associated graphs $\Gr$ and $\fG_{red}$ are isomorphic as in (\ref{comdiag}) via an isomorphism $\phi_0$ that preserves weights of vertices.

Given an out-tree $G$ in reduced form we define the \textit{height} of a vertex $p \in G^0$ as follows. All sinks of $G$ have height $0$; in general, an edge $p$ has height $k$ if one of its successors has height $k-1$ and all others have height $k-1$ or less. Given a graph $G$ in reduced form we create lists of weighted subgraphs of $G$ according to the following inductive scheme. The list $\L_0$ consists of all the sinks of $G$, including their weights. The list $\L_k$ consists of all the subtrees of $G$ that result from the list $\L_{k-1}$ by adding to the graphs of that list the appropriate vertices (and corresponding edges) of height $k$. Note that not all graphs in the list $\L_{k-1}$ are guaranteed to ``receive" a vertex of height $k$ when they ``move" to the list $\L_k$. Furthermore the list $\L_n$, where $n$ is the height of the root of $G$ is $G$ itself. 

The proof of the next theorem follows from the following elementary principle: in order to show that two out-trees $G$ and $\fG$ are isomorphic, it suffices to provide an argument that shows that at every level $k \in \bbN$, the corresponding lists $\L_k$ for the graphs $\Gr$ and $\fG_{red}$ ``coincide". The Aho-Hopcroft-Ullman algorithm is based on a less elaborate version of this principle as well.

\begin{theorem} \label{clas2}
Two tree-refinement subalgebras $\A\big(G, \{k_n\}\big)$ and $\A\big(\fG, \{l_n\}\big)$ of the $2^{\infty}$-UHF $\ca$-algebra are isometrically isomorphic if and only if the out-trees $G$ and $\fG$ are isomorphic, perhaps after an ampliation.
\end{theorem}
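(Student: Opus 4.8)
The plan is to reduce the statement to a purely combinatorial comparison of the fundamental relations and then to run the Aho--Hopcroft--Ullman machinery described above. By Power's classification theorem \cite[Theorem 7.5]{Pow}, two triangular limit algebras are isometrically isomorphic if and only if their spectra are isomorphic as topological semigroupoids; concretely, $\A\big(G,\{k_n\}\big)$ and $\A\big(\fG,\{l_n\}\big)$ are isometrically isomorphic precisely when there is a homeomorphism $\theta:\D^{\star}\to\fD^{\star}$ between the Gelfand spaces of the two diagonals whose ``square''
\[
\theta^{(2)}:\R(\A)\longrightarrow\R(\fA),\qquad (x,y)\longmapsto\big(\theta(x),\theta(y)\big),
\]
is a semigroupoid isomorphism. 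Thus the whole theorem amounts to showing that such a $\theta$ exists if and only if $G$ and $\fG$ become isomorphic out-trees after an ampliation.

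For the (easy) converse direction, suppose $G$ and $\fG$ agree after ampliation, say $G_m\cong\fG_{m'}$ for suitable ampliations. Since both algebras sit in the $2^{\infty}$-UHF algebra, the refinement multiplicities are powers of $2$ and the two UHF systems share the same supernatural number; a standard intertwining argument lets us pass to cofinal subsequences along which the two presentations $\varinjlim(\T^+_{G_n},\sigma_n)$ and $\varinjlim(\T^+_{\fG_n},\sigma_n)$ become isomorphic directed systems. The induced isomorphism of limits restricts to the diagonals and furnishes the required $\theta$.

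For the (hard) forward direction I would start from a semigroupoid isomorphism $\theta^{(2)}$ and reconstruct the tree from $\R(\A)$. The first step is to observe that the branching data of $G$ is intrinsic to the semigroupoid: the reduced form $\Gr$, together with the weight attached to each of its vertices, can be read off from the order and clopen structure of $\R(\A)$, and this reading is exactly the data insensitive to ampliation, since ampliation merely rescales the collapsed path lengths in a uniform, power-of-$2$ fashion. Transporting this data across $\theta^{(2)}$ identifies the corresponding invariants for $\Gr$ and $\fG_{red}$. The second step is the inductive heart of the argument: using the elementary principle stated above, I would argue by induction on the height $k$ that the lists $\L_k$ built from $\Gr$ and from $\fG_{red}$ coincide. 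At level $0$ the semigroupoid isomorphism matches the sinks together with their weights; assuming the lists agree at level $k-1$, the semigroupoid structure determines which subtrees of height $k-1$ are attached below each vertex of height $k$, so $\theta^{(2)}$ carries the level-$k$ subtrees of $\Gr$ bijectively and weight-compatibly onto those of $\fG_{red}$. Since the top lists $\L_n=\Gr$ and $\L_{n'}=\fG_{red}$ are the full reduced trees, we conclude that $\Gr$ and $\fG_{red}$ are isomorphic, and hence, by the reduced-form criterion for ampliated trees, that $G$ and $\fG$ are isomorphic after ampliation.

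The main obstacle I anticipate is the bookkeeping in the inductive step: one must show that the semigroupoid isomorphism respects not merely the unweighted branching pattern but also the vertex weights, up to the freedom afforded by ampliation. Concretely, the weights encode the refinement multiplicities accumulated between successive branch points, and these change under ampliation; the delicate point is to verify that the data extracted from $\R(\A)$ is precisely ``$G$ modulo ampliation'' --- neither more (so that isomorphic algebras force a genuine tree isomorphism) nor less (so that ampliation suffices to realize the match). Making the level-by-level matching of weights compatible with the continuity of $\theta$ and with the uniform rescaling imposed by the $2^{\infty}$-refinement is where the Aho--Hopcroft--Ullman variant does its real work.
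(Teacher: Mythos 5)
Your overall strategy coincides with the paper's: pass to the map induced on the Gelfand space of the diagonal, show that the reduced tree together with its vertex weights is an isomorphism invariant, and run the Aho--Hopcroft--Ullman induction on heights through the lists $\L_k$. The easy direction (intertwining after matching multiplicities) and the shape of the level-by-level induction are both as in the paper. However, there is a genuine gap at exactly the point you yourself flag as ``the main obstacle'': you assert that the weights attached to the vertices of $\Gr$ ``can be read off from the order and clopen structure of $\R(\A)$'' without saying how, and the order structure alone does not determine them --- without the weights the induction only matches the unweighted branching pattern, which does not determine $G$ up to ampliation.

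The paper closes this gap with two concrete devices. First, a normalization: since both algebras sit in the $2^{\infty}$-UHF algebra, every tree-refinement embedding of multiplicity $2^n$ factors as a product of multiplicity-$2$ ones, so after an ampliation one may assume $|G^0|=|\fG^0|=c$ and $k_{n+1}/k_n=l_{n+1}/l_n=2$ throughout; this is what makes the weights of the two reduced trees directly comparable. Second, it identifies each vertex of $\Gr$ with a subset $S=\tilde P\subseteq \D^{\star}$ that is maximal among totally ordered hereditary sets (hereditary relative to the already-matched vertices of height at most $k$ at stage $k+1$ of the induction), and recovers its weight $m$ from the unique normalized trace on the $2^{\infty}$-UHF algebra via $tr(P)=m/c$. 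Because an isometric isomorphism preserves the diagonal, the order on $\D^{\star}$, and the normalized trace, both the location of the reduced-tree vertices and their weights transport across $\phi^{\star}$, and the inductive matching of the lists $\L_k$ then goes through. You should supply this (or an equivalent measure-theoretic invariant) to make your second step an actual argument rather than an assertion.
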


\begin{proof}
Since both $\A\big(G, \{k_n\}\big)$ and $\A\big(\fG, \{l_n\}\big)$ are assumed to be subalgebras of the $2^{\infty}$-UHF $\ca$-algebra, the sets $G^0$ and $\fG^0$ have cardinality a power of $2$. Furthermore, any tree-refinement embedding of multiplicity $2^n$ factors as a product of $n$ tree-refinement embeddings of multiplicity 2. Therefore we may assume that, perhaps after an ampliation, both graphs $G$ and $G_0$ have the same number of vertices, say $|G^0|=|\fG^0|=c$, and that $k_{n+1}/k_n = l_{n+1}/l_{n}=2$, for all $n \in \bbN$. From this it is immediate that if the out-trees are isomorphic then $\A\big(G, \{k_n\}\big)$ and $\A\big(\fG, \{l_n\}\big)$ are isometrically isomorphic as algebras.

Conversely, asume that $\A\big(G, \{k_n\}\big)$ and $\A\big(\fG, \{l_n\}\big)$ are isometrically isomorphic via an isomorphism $$\phi: \A\big(G, \{k_n\}\big) \longrightarrow \A\big(\fG, \{l_n\}\big).$$ 
Let $D$ and $\fD$ denote the diagonals of $\A\big(G, \{k_n\}\big)$ and $\A\big(\fG, \{l_n\}\big)$ with Gelfand spaces $D^{\star}$ and $\fD^{\star}$ respectively and let 
\[
\phi^{\star}:  D^{\star} \longrightarrow \fD^{\star}; x \longmapsto x\circ \phi^{-1}.
\]
Consider $S\subseteq D^{\star}$ which is maximal under the properties of being totally ordered and hereditary, i.e., if $x \in S$ and $y\leq x$, then $y \in S$. (Here $\leq$ denotes the order on $\D^{\star}$ arising from the semigroupoid $\R\big(A\big(G, \{k_n\}\big)\big)$.) It is easy to see that there exist $p, q\in G^0$ and a path $e_1e_2\dots e_n=pe_1e_2\dots e_nq$, with $|s^{-1}(q)|=0$, $|s^{-1}(p)|\geq 2$ and $|s(e_i)|=1$, $i=1, 2, \dots, n-1$, so that $S = \tilde{P}$, where $P=L_q + L_{s(e_1)}+\dots +L_{s(e_{n-1})}$. In other words $S = \tilde{P}$ corresponds to one of the sinks of $\Gr$. Furthermore  the weight of $S$ is equal to $m$, where $tr(P) = m/c$ and $tr$ is the unique normalized trace on the $2^{\infty}$-UHF $\ca$-algebra. Since both $(D^{\star}, \leq)$ and the normalized trace are preserved by $\phi$ and its induced maps, we have that $\phi^{\star}(S) \subseteq \fD^{\star}$ corresponds to a sink for $\fG_{red}$ with the same weight. Therefore $\phi^{\star}$ induces a bijection between the $\L_0$ lists of $\Gr$ and $\fG_{red}$ that preserves weights, i.e., a reduced graph isomorphism between the elements of that list.

By way of induction, assume that for $k \geq 1$, we have verified that $\phi^{\star}$ induces a bijection between the vertices of $\Gr$ and $\fG_{red}$ that preserves their heights up to level $k$ and also it induces a bijection between the lists $\L_0, \L_1, \dots, \L_k$ of $\Gr$ and $\fG_{red}$ that preserves weighted graph isomorphism classes. We show that the same happens for $k+1$.

Consider $S\subseteq D^{\star}$ which is maximal under the properties of being totally ordered and hereditary with respect to vertices of height at most $k$, i.e., if $x \in S$ and $y\leq x$, then $y \in S$ or $y \in S'$, where $S'$ is a vertex of height at most $k$. Arguing as before, it is easy to see that $S= \tilde{P}$, where $P$ corresponds to a vertex of height $k+1$ in $\Gr$ with weight $m$, where $tr(P) = m/c$. Furthermore, $S$ and its saturation in $\L_k$ determines an element of $\L_{k+1}$ which is mapped by $\phi^{\star}$ to an element in the $\L_{k+1}$ list for $\fG_{red}$ that belongs to the same isomorphism class as $S$ and its saturation. In particular $\phi^{\star}(S)$ has height $k+1$. This concludes the inductive step.

It is clear that the induction terminates successfully when the $\L_k$ lists consist of a single  element for both $\Gr$ and $\fG_{red}$. \end{proof}

The previous arguments can be modified to work in the general case as well. If $\A\big(G, \{k_n\}\big)$ and $\A\big(\fG, \{l_n\}\big)$ are isomorphic tree-refinement algebras, then by \cite[Theorem 7.5]{Pow}, both algebras have the same supernatural number. Since the tree-refinement embeddings commute, it follows that after perhaps two ampliations (one for each out-tree $G$ and $\fG$) we may assume that both $G^0$ and $\fG^0$ have the same cardinality. Repeating the arguments of the proof above, we obtain that the out-trees  $G$ and $\fG$ are isomorphic. Note that the ampliations for $G$ and $\fG$ cannot be arbitrary but instead compatible with the supernatural numbers of $\A\big(G, \{k_n\}\big)$ and $\A\big(\fG, \{l_n\}\big)$. 

Conversely, if $\A\big(G, \{k_n\}\big)$ and $\A\big(\fG, \{l_n\}\big)$ have the same supernatural number and the out-trees $G$ and $\fG$ are isomorphic after compatible ampliations, then one can show that $\A\big(G, \{k_n\}\big)$ and $\A\big(\fG, \{l_n\}\big)$ are isomorphic by using an argument similar to the zig-zag argument we use to show that two UHF $\ca$-algebras with the same supernatural number are isomorphic. We leave the details for the interested reader.


\end{document}